\newtheorem{theorem}{Theorem}
\newtheorem{corollary}[theorem]{Corollary}
\newtheorem{definition}[theorem]{Definition}
\newtheorem{lemma}[theorem]{Lemma}
\newtheorem{proposition}[theorem]{Proposition}
\newtheorem{remark}[theorem]{Remark}
\newenvironment{proof}[1][Proof]{\noindent\textbf{#1.} }{\ \rule{0.5em}{0.5em}}
\begin{document}

\title{A logarithm law for nonautonomous systems fastly converging to equilibrium and mean field coupled systems.}
\author{Stefano Galatolo \\
Dipartimento di Matematica \\ Centro Interdipartimentale per lo Studio dei Sistemi Complessi \\ Università di Pisa, Italy, 56127\\
stefano.galatolo@unipi.it \and Davide Faranda \\
Laboratoire des Sciences du Climat et de l'Environnement,\\UMR 8212 CEA-CNRS-UVSQ, Universit\'e Paris-Saclay,\\ IPSL CEA Saclay,\\ l'Orme des Merisiers, 91191, Gif-sur-Yvette, France\\
davide.faranda@lsce.ipsl.fr}

\maketitle

\begin{abstract}
We prove that if a nonautonomous system has in a certain sense a fast
convergence to equilibrium (faster than any power law behavior) then the
time $\tau _{r}(x,y)$ needed for a typical point $x$ to enter for the
first time in a ball $B(y,r)$ centered at $y$, with small radius \ $r $ scales as the local dimension of the equilibrium measure \ $\mu $ at $y$, i.e.%
\begin{equation*}
\underset{r\rightarrow 0}{\lim }\frac{\log \tau _{r}(x,y)}{-\log r}%
=d_{\mu }(y).
\end{equation*}

We then apply the general result to concrete systems of different kind,
showing such a logarithm law for asymptotically autonomous solenoidal maps
and mean field coupled expanding maps.
\end{abstract}
\renewcommand{\thefootnote}{\fnsymbol{footnote}} 
\footnotetext{ {\em Keywords}: non autonomous dynamics; Logarithm Law; hitting time; local dimension. \ MSC(2020):{37C60, 37C30, 37N10}}     
\renewcommand{\thefootnote}{\arabic{footnote}}

{\bf \noindent  Several statistical methods to assess the rarity of an event are based on the fact that the timescale in which the event is expected to occur in the evolution of a system is approximately the inverse of the probability of the event itself. One of the possible formalizations of this relation is also called "Logarithm Law".
Results of this kind have been proved for many systems, however the relation is not always true even for chaotic systems. In the context of the study of climate change it is important to understand under which assumptions the above kind of relation holds in the non autonomous case.  We address this question, showing that the relation holds for asymptotically autonomous systems with a fast convergence to equilibrium and other similar classes of systems, including some mean field coupled ones. }

\section{Introduction}

One way to express the rarity of an event in some evolving system is to estimate the time scale in which the event is likely to occur, given the current situation of the system, and thus given some information on its initial
condition.

In the context of dynamical systems this naturally leads to the study of waiting times or hitting times indicators and to the study of the
hitting time distribution, which is in turn connected to the classical theory of extreme events (see \cite{lucarini2016extremes,faranda2024Statistical} for a survey on these topics with a particular focus on dynamical systems theory).

Most of the results already established in this direction are related to autonomous dynamical systems or stationary processes. Many important natural and social phenomena are characterized by the fact that the parameters describing the dynamics of interest may evolve with time, and the associated systems are then not autonomous. This is particularly relevant in the study of climate models and in particular in connection with the study of climate change.
Due to its profound impact on society, the study of extreme events is also particularly important in the context of climate and meteorological studies.
The study of extremal events in nonautonomous systems is then highly motivated.
 In this case the theoretical study is still in its infancy, and it is not clear under which assumptions results similar to the ones currently used in the autonomous case can be established.
The kind of non-autonomous systems we study having in mind the application to climate dynamics (and to the mean field dynamics) is the so-called {\em sequential} one, where the parameters change in time in a certain deterministic way and not the {\em random} one, in which the parameters vary randomly according to some stationary law.

In this article we focus on one of the most basic results, linking the time scale in which some rare event is likely to occur with the {\em fractal} dimension of the system in a neighborhood of the event itself, expressed by the so-called {\em local dimension}. Let $X$ be the phase space in which our dynamics occur. We will always assume that $(X,d)$ is a compact metric space. Let $x_0,x_1,...\in X$ be a trajectory of our system with initial condition $x_0$, let $y\in X $ be a target point.
Let \begin{equation*}
\tau _{r}(x_0,y)=\min \{n\in \mathbb{N}:d(x_n,y)<r\}
\end{equation*}
be the time needed for the trajectory starting from $x_0$ to enter a target of radius $r$ centered in $y$.
In the context of autonomous dynamics, supposing the system generating the trajectory has an invariant measure $\mu$, in many cases of having fast speed of mixing 
the following result can be proved: for $\mu$ almost all initial conditions $x_0$
\begin{equation}\label{loglaww}
\underset{r\rightarrow 0}{\lim }\frac{\log \tau _{r}(x_0,y)}{-\log r}%
=d_{\mu }(y)
\end{equation}
where 
\begin{equation}\label{dim} d_{\mu }(y):=\underset{r\rightarrow 0}{\lim }\frac{\log \mu(B_{r}(y))}{\log r}
\end{equation}
is the local dimension of $\mu$ at $y$ and $B_{r}(y)$ denotes the ball with center $y$ and radius $r$. This kind of result was also called a logarithm law and relates the scaling behavior of the hitting time on small targets, with the one of the measure of the targets themselves, given by the local dimension.
A logarithm law is a weaker result with respect to the ones on distribution of hitting times and extreme values theory. This result is also somewhat weaker with respect to the so-called dynamical Borel-Cantelli results (see \cite{GK}). 
In the autonomous case, logarithm laws were established for the geodesic flows and similar systems (see e.g. \cite{Sul,KM,GN}), similar results have been established for Lorenz-like flows (\cite{AGP}, \cite{GP10}, \cite{GPN}) or infinite systems (\cite{GHPZ}). 
Generally speaking, these types of statements hold true  for systems having a superpolynomial decay of correlations  
 (\cite{gal07}) even for targets that are not balls  (\cite{G10}). 
Logarithm laws, however, also hold for systems which are not chaotic like
rotations and interval exchanges. In this case, their behavior is related to the arithmetical
properties of the system (\cite{KM}, \cite{kimseo}, \cite{GP??}, \cite{BC}).
Deep relations have indeed been shown with diophantine approximation (see e.g. \cite{HV}, \cite{GP??}, \cite{GSR}).
It is worth remarking that (relatively slowly) mixing systems are known for which a logarithm law does not hold at all, and the time needed for a typical orbit to hit a small target is much larger than the inverse of the measure of the target  (see  \cite{GP??}, \cite{GSR}).

In the paper \cite{freitas2017extreme} Extreme Values Theory results are established  with the  aim of  application  to non autonomous dynamical systems. In \cite{freitas2017extreme} a previous approach of  \cite{Hus} is adapted, by weakening the uniform mixing condition that was previously used to a non uniform condition which can be verified in the context of dynamical systems.  The paper \cite{freitas2017extreme} establishes Extreme Values Laws and exponential distribution of the hitting times for a class of sequential dynamical systems whose transfer operators satisfy uniformly a list of assumption which usually are used to establish the spectral gap for those operators on a Banach space of absolutely continuous measures. This leads to application to a sequential composition of (multidimensional) expanding maps. 
The result is hence particularly interesting in the context of non autonomous dynamics, but cannot be applied to the case of systems having fractal attractor, whose dimension plays an important role in the study of the event's rarity, which is the main goal of this paper.

The link between the scaling behavior of the occurrence of the hitting time and the local dimension already established in the autonomous case  was successfully used  in climate science to estimate the rarity of given events. Logarithm laws and the the results coming from extreme value theory were used as theoretical tools to interpret empirical data and validate the use of certain statistical estimators~\cite{faranda2017dynamical,faranda2023persistent, faranda2013extreme,faranda2020diagnosing,caby2022matching}.

In non-autonomous systems, where the governing equations evolve with time, this can lead to a time-varying hitting time statistics and traditional methods for analyzing hitting time distributions may not directly apply. Moreover, the presence of external forcing or environmental perturbations further complicates the analysis, potentially leading to  deviations from expected hitting time behaviors, as highlighted numerically in~\cite{faranda2019hammam}. 
In the context of climate change applications, where understanding the timing and occurrence of extreme events is crucial, these extensions are particularly pertinent. This paper contributes to this endeavor by demonstrating the existence of a logarithm law for hitting times in certain non-autonomous systems, shedding light on the dynamics of rare events in evolving environments.

In the main result of the paper  (see Theorem \ref{main}) we consider a sequential nonautonomous deterministic dynamical system $(X,T_i)$ where $i\in \mathbb{N}^*$ and $T_i:X\to X$. Supposing that he system has a fast convergence to equilibrium to some measure $\mu$ we show that typical trajectories satisfy a logarithm law in the sense of \eqref{loglaww}.

The convergence to equilibrium notion which we consider is based on the convergence of the iterates $L_{T_n}\circ ...\circ L_{T_1}(\mu_0)$ of some initial reference measures $\mu_0$ in a certain space, to some {\em equilibrium} measure $\mu$ by the sequential composition of transfer operators $L_{T_i}$ associated to the maps $T_i$.
Some important class of systems where one is led to consider a sequential composition of maps behave like this.
We then indeed apply our main general theorem to a class of {\em asymptotically autonomous} solenoidal maps which can have fractal attractors of different dimensions (See Section \ref{yp}) and to a class of mean field coupled systems having exponential convergence to equilibrium (See Section \ref{coupmap}).
We remark that asymptotically autonomous systems, in which the considered maps have a certain limit map $T_i\to T_0$, have been proposed in \cite{Ash} and \cite{Ash2} as natural kind of models to study to understand tipping points and the statistical properties of climate change.
The concept of physical invariant measures for slowly varying non autonomous systems is reviewed in \cite{Y17}. The existence of an absolutely continuous invariant measure for the limit map in asymptotically autonomous systems has been studied in \cite{GBG19}.

\bigskip

\section{A logarithm law in the nonautonomous case}\label{sec2}

Let us introduce some notation and terminology that will be used in the
following: let us consider two compact metric spaces $X,Y$. Without loss of generality we will suppose that the diameter of $X$ and $Y$ is $1$.
Let us consider the spaces of
Borel probability measures $PM(X),~PM(Y)$ on $X$ and $Y,$ and a Borel
measurable $F:X\rightarrow Y$. We denote the pushforward of $F$  as $%
L_{F}:PM(X)\rightarrow PM(Y)$, defined by the relation%
\begin{equation*}
\lbrack L_{F}(\mu )](A)=\mu (F^{-1}(A))
\end{equation*}%
for all $\mu \in PM(X)$ and  measurable set $A\subseteq Y$. With the same
definition, the pushforward can be extended as a linear function $%
L_{F}:SM(X)\rightarrow SM(Y)$ from the vector space of Borel signed measures
on $X$ to the same space on $Y$. In this case $L_{F}$ is linear and will also be called
the transfer operator associated with the function $F$.

Let us consider on $(X,d),$ a family of maps $T_{i}:X\rightarrow X$ with $%
i\in \mathbb{N}^{\ast }$ and a sequential non autonomous system $(X,T_{i}).$

Consider two points $x,y\in X.$ The orbit of $x$ is the sequence $$
x,T_{1}(x),T_{2}(T_{1}(x)),...$$ 
 We denote the  sequential composition of the maps by  $T^{(0)}(x)=x$
and inductively for $k\in \mathbb{N}^{*}$,
$
T^{(k)}(x):=T_{k}(T^{(k-1)}(x))$.
Let $B_{r}(y)$ be the ball of radius $r$ centered in $y$, we denote
the hitting time of $B_{r}(y)$ for the orbit of $x$ as%
\begin{equation*}
\tau _{r}(x,y)=\min \{n\in \mathbb{N}:T^{(n)}(x)\in B_{r}(y)\}.
\end{equation*}

Typically $\tau _{r}(x,y)\rightarrow +\infty $ \ as $r\rightarrow 0$. To
give an estimate on how rare is the hitting of such small targets as an
event on our system, in the following we will estimate the speed, asymptotically $\tau
_{r}(x,y)$ goes to $\,+\infty .$
%

In \ a system which is not autonomous there is not an invariant measure, 
we will replace it with a kind of asymptotically invariant one, which we
will call the equilibrium measure.

We will suppose that in our phase space $X$ a starting "reference" Borel
probability measure $\mu _{0}$ is considered (it can be for example the
normalized volume measure when $X$ is a Riemannian manifold), and that the \
iterates of the pushforward of $\mu _{0}$ \ trough the dynamics converge to
a certain measure $\mu $.
We will suppose that
there is a certain $\mu \in PM(X)$ such that \ as $n\rightarrow \infty $ 
\begin{equation*}
L_{T^{(n)}}\mu _{0}\rightarrow \mu
\end{equation*}%
with convergence in a certain topology, and with a certain superpolynomial  speed.

To formalize the assumptions, let us define a certain weak norm and distance to be considered in spaces of measures on metric spaces.
Let $(X,d) $ be a compact metric space and let $g:X\longrightarrow \mathbb{R}$ be a Lipschitz function and let $Lip(g)$
be its best Lipschitz constant, i.e. 
\begin{equation*}
\displaystyle{Lip(g)=\sup_{x,y\in X}\left\{ \dfrac{|g(x)-g(y)|}{d(x,y)}%
\right\} }.
\end{equation*}
We also define the Lipschitz norm of $g$ as
\begin{equation*}
||g||_{Lip}=\max(Lip(g),\sup_{x\in X} |g(x)|).
\end{equation*}

\begin{definition}\label{wasserstein}
Given a Borel signed measure $\mu $  on $X,$ we define a 
\textbf{Wasserstein-Kantorovich Like} norm of $\mu $  by%
\begin{equation}
||\mu ||_W =\sup_{||g||_{Lip}\leq1 }\left\vert \int {g}%
d\mu  \right\vert .
\end{equation}%
To this norm one can associate the distance
\begin{equation}
W(\mu,\nu) = ||\mu-\nu||_W
\end{equation}%
for $\mu,\nu \in SM(x)$.
\end{definition}

Let us denote the sequential composition of transfer operators $L_{T_{k}}:SM(X)\rightarrow SM(X)$ associated to the maps $T_{k}$ as   $$L^{(j,k)}:=L_{T_{k}}\circ
L_{T_{k-1}}\circ ...\circ L_{T_{j}}$$ for $k>j$ and
$$L^{(k)}:=L_{T_{k}} \circ ...\circ L_{T_{1}}$$ for $k>1$.
Coherently we denote $L^{(k,k)}:=L_{T_{k}}$, $L^{(0)}:=Id$ and $L^{(1)}:=L_{T_{1}}.$

Now we can formalize the general framework in which our abstract result is stated. As usual in the study of transfer operators we consider the action of the operator itself on a suitable normed vector space of measures or distributions. We suppose  that the space considered, which we will denote by $B_s$ has a topology which is stronger than the one induced by the $W$ distance above defined.

\begin{definition}\label{def1}
Let $(B_{s},||~||_{s})\subseteq SM(X)$ be a normed
vector subspaces of the space of Borel signed measures on $X$.
Suppose there is $C\geq 0$
such that $||~||_{W}\leq C||~||_{s}$.
Suppose that
for each $i,$ $L_{T_{i}}$ preserves $B_{s}$. We say that the nonautonomous
system $(X,T_{i})$ has weak convergence to equilibrium with
superpolynomial speed if there is a probability measure $\mu \in B_{s}$ \ and $\Phi $
superpolynomially decreasing \footnote{We say $\Phi $ is superpolynomially  decreasing if the function $\Phi :\mathbb{N\rightarrow R}$ is decreasing and \ for
each $\alpha >0$, $\lim_{n\rightarrow \infty }n^{\alpha }$ $\Phi
(n)=0$. } such that $\forall k,j\in \mathbb{N} $ with $k\geq j $ and each probability measure $\mu_0\in B_s $ 
\begin{equation*}
||\mu -L^{(j,k)}\mu _{0}||_{W}\leq \Phi (k-j)max(1,||\mu -\mu _{0}||_{s}).
\end{equation*}
\end{definition}

We stress that in the above definition we test the convergence to equilibrium  where $k, j$ varies and the speed depends on $k-j$. In this way one can have a bound on the convergence when iterating the operators at different starting times.

\begin{definition}\label{def2}
We say that a set $A\subseteq $ $B_{s}$ has uniformly bounded Lipschitz
multipliers if there is \ $C_{A}\geq 0$ depending on $A$ such that for each $%
\mu _{0}\in A$ and $\phi \in Lip(X)$ \ we have $\phi \mu _{0}\in B_{s}$ and%
\begin{equation*}
||\phi \mu _{0}||_{s}\leq C_{A}||\phi ||_{Lip}.
\end{equation*}
\end{definition}

To better explain this definition, we remark as an example that in Section \ref{coupmap}, considering expanding maps, we will choose as strong space $B_s$ a space of measures having a density in the Sobolev space $W^{1,1}$. In this case $A$ will be a subset of this space. If this set is bounded for the Sobolev norm, then it has bounded Lipschitz multipliers. 

With the above definitions we can state the main general result of the
paper, linking the scaling behavior of the hitting time  of typical orbits and the local
dimension of $\mu $.


\begin{theorem}\label{main}
Let us consider a  probability measure $\mu _{0}\in PM(X)$, suppose that the set $$A:=\{\mu _{k}:=L^{(k)}\mu_0,k\in 
\mathbb{N}\}$$ is bounded in $B_s$ and has uniformly bounded Lipschitz multipliers. Suppose furthermore that $(X,T_{i})$ has
convergence to equilibrium with superpolynomial speed as in Definition \ref{def1}.
 Suppose $y\in X$ is such that the local dimension $d_{\mu }(y)$ \
of $\mu $ at $y$ exists in the sense of \eqref{dim} and also suppose that the preimages of $y$ have zero $\mu_0$ measure: more precisely let us suppose that $\forall i\in \mathbb{N} $ 
\begin{equation}\label{preimage}
\mu_0 (\{x \ s.t. \ T^{(i)}(x)=y\})=0.
\end{equation}
Then we have
\begin{equation*}
\underset{r\rightarrow 0}{\lim }\frac{\log \tau _{r}(x,y)}{-\log r}%
=d_{\mu }(y)
\end{equation*}%
 for $\mu _{0}$ almost every $x$.
\end{theorem}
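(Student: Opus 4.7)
The plan is to establish the two inequalities $\liminf_{r\to 0}\log\tau_r(x,y)/(-\log r)\ge d_\mu(y)$ and $\limsup_{r\to 0}\log\tau_r(x,y)/(-\log r)\le d_\mu(y)$ separately. In both directions I would work along the geometric sequence $r_k=2^{-k}$, set a threshold $N_k=r_k^{-(d_\mu(y)\mp\varepsilon)}$ for fixed $\varepsilon>0$, and extract the almost-sure statement via a Borel--Cantelli argument; monotonicity of $\tau_r$ in $r$ then interpolates back to arbitrary $r$.

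For the lower bound I would use a first-moment (Markov) estimate. Setting $\mu_n:=L^{(n)}\mu_0$ and approximating $\chi_{B_{r_k}(y)}$ from above by a Lipschitz bump $\phi_{r_k,h}$ of height $1$, support in $B_{r_k+h}(y)$ and $\|\phi_{r_k,h}\|_{Lip}\lesssim 1/h$, Definition~\ref{def1} together with the $B_s$-boundedness of $A$ gives
\[
\mu_n(B_{r_k}(y))\;\le\;\mu(B_{r_k+h}(y))\;+\;\frac{C\,\Phi(n)}{h}.
\]
A union bound $\mu_0(\tau_{r_k}\le N_k)\le\sum_{n=0}^{N_k}\mu_n(B_{r_k})$ combined with $\mu(B_{\varrho})\le\varrho^{d_\mu(y)-\varepsilon/2}$ (from the local dimension) yields $N_k\,\mu(B_{2r_k})\lesssim r_k^{\varepsilon/2}$ once $h$ is tuned as a function of $n$ (of order $\Phi(n)^{1/(d_\mu(y)+1)}$ when this dominates $r_k$, else $h=r_k$), so the "equilibrium" part of the sum is summable in $k$. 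The "transient" part at very small $n$, where the Lipschitz estimate degenerates, is absorbed using the preimage hypothesis $\mu_0((T^{(n)})^{-1}\{y\})=0$: continuity of $\mu_0$ gives $\mu_n(B_r)\to 0$ as $r\to 0$ for each fixed $n$, and the uniformly bounded Lipschitz multipliers of Definition~\ref{def2} supply enough control to make the finite transient window summable in $k$ as well. Borel--Cantelli then concludes.

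For the upper bound I would use a second-moment (Chebyshev) estimate on the counting function $S_{N_k}(x):=\sum_{n=1}^{N_k}\chi_{B_{r_k}(y)}(T^{(n)}(x))$, whose expectation under $\mu_0$ is $\sum_n\mu_n(B_{r_k})\approx N_k\mu(B_{r_k})\gtrsim r_k^{-\varepsilon/2}$. The main obstacle is the quasi-independence estimate needed to control the variance: for $n<m$ the joint probability equals $L^{(n+1,m)}(\chi_{B_{r_k}}\mu_n)(B_{r_k})$, and, sandwiching $\chi_{B_{r_k}}$ by a Lipschitz bump $\phi_{r_k}$, Definition~\ref{def2} ensures that the renormalised measure $\phi_{r_k}\mu_n/\!\int\phi_{r_k}\,d\mu_n$ lies in $B_s$ with a norm controlled by $1/r_k$. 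Definition~\ref{def1} then governs its subsequent evolution by $L^{(n+1,m)}$ and produces a correlation bound with error $O(\Phi(m-n)/r_k^{\,2})$. Pairs $(n,m)$ with $m-n$ too small for this estimate to be useful are handled by the trivial bound $\chi_{B_{r_k}}\!\circ T^{(n)}\cdot\chi_{B_{r_k}}\!\circ T^{(m)}\le\chi_{B_{r_k}}\!\circ T^{(n)}$. Summing yields a variance of the correct order, and Chebyshev followed by Borel--Cantelli gives $\mu_0(S_{N_k}=0)\to 0$ summably in $k$, hence the desired $\mu_0$-almost-sure upper bound on $\tau_{r_k}$. The delicate ingredient is keeping the $B_s$-norm of $\phi_{r_k}\mu_n/\!\int\phi_{r_k}\,d\mu_n$ under control precisely when the ball's mass $\int\phi_{r_k}\,d\mu_n$ is small, which is why the uniform Lipschitz-multiplier hypothesis is essential.
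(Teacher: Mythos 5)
Your overall architecture coincides with the paper's: a first--moment Borel--Cantelli estimate for the lower bound and a second--moment quasi--independence estimate for the upper bound, both driven by the superpolynomial convergence to equilibrium applied to Lipschitz bump functions whose Lipschitz norms blow up only polynomially in the scale. The differences are in packaging and in one step. The paper first reduces Definitions \ref{def1} and \ref{def2} to the single correlation--type condition \eqref{sp} (Lemma \ref{bomu}) and then proves Proposition \ref{mainprop}; you work directly from the two definitions, which is fine and saves a layer. For the upper bound the paper proves a full strong--law statement $Z_n/E(Z_n)\to 1$ a.e.\ (Lemma \ref{unoo}, via a variance bound along a subsequence $n_k$ with $E(Z_{n_k})\approx k^2$ plus interpolation) and then extracts the hitting--time bound; your direct Chebyshev estimate on $\mu_0(S_{N_k}=0)\le \mathrm{Var}(S_{N_k})/E(S_{N_k})^2$, with the near--diagonal pairs handled by the trivial bound and the far pairs by the $O(\Phi(m-n)/r_k^2)$ correlation error, is a legitimate and somewhat more economical route to the same conclusion, since $E(S_{N_k})\gtrsim r_k^{-\varepsilon/2}$ grows geometrically along $r_k=2^{-k}$ and makes the Chebyshev bounds summable directly. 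Your observation that the normalization of $\phi_{r_k}\mu_n$ is harmless because the small factor $\int\phi_{r_k}\,d\mu_n$ cancels is exactly the point that makes the uniform Lipschitz--multiplier hypothesis do its job.

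The one place where your argument, as written, would fail is the lower bound's ``transient'' range. You claim that Definition \ref{def2} makes $\sum_k\sum_{n\le n_0}\mu_n(B_{r_k}(y))$ summable; it does not. The multiplier bound only yields $\mu_n(B_{r_k}(y))\lesssim C_A\Vert\phi_{r_k}\Vert_{Lip}\approx r_k^{-1}$, which is vacuous, and hypothesis \eqref{preimage} gives $\mu_n(B_{r}(y))\to 0$ with no rate, so for a fixed small $n$ the series $\sum_k\mu_n(B_{2^{-k}}(y))$ may well diverge. The repair is standard and you already half--state it: no summability is needed for finitely many $n$, because by \eqref{preimage} the set $\{x:\exists n\le n_0,\ T^{(n)}(x)=y\}$ is $\mu_0$--null, hence for $\mu_0$--a.e.\ $x$ one has $\min_{n\le n_0}d(T^{(n)}(x),y)>0$ and therefore $\tau_{r_k}(x,y)>n_0$ for all large $k$; Borel--Cantelli is then applied only to the sum over $n_0<n\le N_k$, where your tuning of $h$ works, and one lets $n_0\to\infty$ at the end. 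The paper sidesteps the issue differently, choosing the diagonal sequence $r_k=k^{-1/(d_\mu(y)-\epsilon)}$ and observing (footnote in the proof of Proposition \ref{mainprop}) that, for $x$ avoiding the null set of \eqref{preimage}, $\tau_{r_k}(x,y)\le k$ for infinitely many $k$ forces $d(T^{(k)}(x),y)\le r_k$ for infinitely many $k$, so only the single sequence of events $\{d(T^{(k)}(x),y)\le r_k\}$ needs to be summable and no union over $n$ ever appears. With that correction your proposal is sound.
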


\begin{remark}
    We remark that the assumption \eqref{preimage} is automatically satisfied if  the maps considered have countable degree (that is $\forall x \in X, $ the set $T^{-1}(x)$ is countable)
and $\mu_0$ is nonatomic.
\end{remark}

In order to prove the main result we need some preliminary result.

The first one is a kind of
dynamical Borel-Cantelli Lemma adapted to our case.



\begin{lemma}
\label{unoo}
Let $(X,T_{i})$ be a sequential nonautonomous system, let $\mu
_{0}\in PM(X)$ and $\mu _{k}:=L^{(k)}\mu_0$ as above. 

Suppose there is $\mu \in PM(X)$ and a superpolynomially decreasing $\Phi $ such that for each $g\in Lip(X),$
$j,k\in \mathbb{%
N}$, the measure $g\mu _{j}$ converges to $\mu \int gd\mu _{j}$ at a uniform
superpolynomial speed in the W distance: more precisely
for each such $g,j,k$
\begin{equation}
||L_{T_{j+k}}\circ ...\circ L_{T_{j+1}}[g\mu _{j}]-\mu \int gd\mu
_{j}||_{W}\leq \max (1,||g||_{Lip})\Phi (k).  \label{sp}
\end{equation}%
Let $g_{k}$ be a sequence of positive Lipschitz observables such
that%
\begin{equation*}
\sup_{x\in X,k\in \mathbb{N}}|g_{k}(x)|\leq 1.
\end{equation*}%
Suppose that $\exists B\geq 1,\ \beta >0$ such that $||g_{k}||_{Lip}\leq Bk^{\beta }$ and
suppose that $\exists \gamma ,C>0$ such that%
\begin{equation}
 \sum_{j\leq n}\int g_{j}(T^{(j)}(x))d\mu _{0} \geq Cn^{\gamma }.
\label{122}
\end{equation}
Then%
\begin{equation*}
\frac{\sum_{j\leq n}g_{j}(T^{(j)}(x))}{\sum_{j\leq n}\int
g_{j}(T^{(j)}(x))d\mu _{0}}\rightarrow 1
\end{equation*}%
$\mu _{0}$ almost everywhere.
\end{lemma}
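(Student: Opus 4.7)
The plan is to read Lemma \ref{unoo} as a weak law of large numbers for the non-stationary sequence $S_n(x) := \sum_{j \leq n} g_j(T^{(j)}(x))$, coming from a variance bound based on the decorrelation hypothesis, followed by a subsequence-plus-monotonicity argument. Writing $E_n := \int S_n\,d\mu_0 = \sum_{j \leq n}\int g_j\,d\mu_j$ for the $\mu_0$-expectation, the target is $S_n/E_n \to 1$ $\mu_0$-a.e.

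The first block of work is a covariance estimate. For $i < j$ I rewrite
\[
\int g_i(T^{(i)}(x))\,g_j(T^{(j)}(x))\, d\mu_0(x) = \int g_j\, d\big[L^{(i+1,j)}(g_i \mu_i)\big],
\]
and apply hypothesis \eqref{sp} with starting time $i$, horizon $k=j-i$ and observable $g_i$ to compare $L^{(i+1,j)}(g_i\mu_i)$ with $\mu \int g_i\,d\mu_i$ in the $W$-norm. Pairing against $g_j$ and using $\|g_j\|_{Lip}\leq 1+Bj^\beta$ yields a covariance error of order $(1+Bi^\beta)(1+Bj^\beta)\Phi(j-i)$. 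I then replace $\int g_j\,d\mu$ by $\int g_j\,d\mu_j$ at cost $(1+Bj^\beta)\Phi(j)$, using that \eqref{sp} with $g\equiv 1$ gives $\|\mu_j-\mu\|_W\leq \Phi(j)$. Summing over $1\leq i<j\leq n$, the superpolynomial decay of $\Phi$ absorbs the polynomial prefactors, and combined with the diagonal estimate $\mathrm{Var}_{\mu_0}(g_i\circ T^{(i)}) \leq \int g_i\,d\mu_i$ (valid since $0\leq g_i\leq 1$) one obtains a bound of the shape
\[
\mathrm{Var}_{\mu_0}(S_n) \;\leq\; C\big(E_n + n^{\beta} E_n + n^{1+2\beta}\big).
\]

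The second block is a Chebyshev--Borel--Cantelli argument along a thin subsequence, followed by a monotonicity sandwich. Since $g_j\geq 0$, both $S_n$ and $E_n$ are non-decreasing in $n$ and the increments of $E_n$ are at most $1$; by \eqref{122}, $E_n\to\infty$. I pick $n_k$ minimal with $E_{n_k}\geq k^p$ for an integer $p$ to be fixed large in terms of $\gamma$ and $\beta$; the lower bound $E_n\geq Cn^\gamma$ then gives $n_k\leq C'k^{p/\gamma}$, and Chebyshev yields
\[
\mu_0\big\{|S_{n_k}-E_{n_k}| > \varepsilon E_{n_k}\big\} \;\leq\; \frac{\mathrm{Var}(S_{n_k})}{\varepsilon^2 k^{2p}},
\]
which, after inserting the variance bound, is summable in $k$ for $p$ sufficiently large. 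The classical Borel--Cantelli lemma produces $S_{n_k}/E_{n_k}\to 1$ a.e., and for $n_k\leq n<n_{k+1}$ monotonicity gives
\[
\frac{S_{n_k}}{E_{n_{k+1}}} \;\leq\; \frac{S_n}{E_n} \;\leq\; \frac{S_{n_{k+1}}}{E_{n_k}},
\]
so the full convergence follows once $E_{n_{k+1}}/E_{n_k}\to 1$, which is automatic from the minimality of $n_k$ and the unit-bounded increments of $E_n$. The delicate point, and the main obstacle I anticipate, is the joint calibration of $p$ so that the Chebyshev probabilities are summable \emph{and} the consecutive ratios $E_{n_{k+1}}/E_{n_k}$ tend to $1$; the polynomial-in-$n$ growth of $\|g_k\|_{Lip}$ inside the variance bound makes this a balance to strike using the superpolynomial decay of $\Phi$ together with the precise values of $\beta$ and $\gamma$.
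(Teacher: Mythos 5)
Your overall architecture is exactly the one the paper uses: a second--moment bound obtained from \eqref{sp} via the identity $\int g_i\circ T^{(i)}\,g_j\circ T^{(j)}\,d\mu_0=\int g_j\,d\bigl[L^{(i+1,j)}(g_i\mu_i)\bigr]$, then Chebyshev and Borel--Cantelli along the subsequence $n_k$ defined by $E_{n_k}\geq k^p$, and finally the monotonicity sandwich using $E_{n_{k+1}}/E_{n_k}\to 1$. That part is sound. The gap is in the variance estimate, and it is fatal in the generality of the lemma: you apply the decorrelation bound to \emph{every} pair $i<j$, so the near--diagonal pairs contribute
\begin{equation*}
\sum_{1\leq i<j\leq n}(1+Bi^{\beta})(1+Bj^{\beta})\Phi(j-i)\;\asymp\;n^{1+2\beta}\sum_{m\geq 1}\Phi(m),
\end{equation*}
since for $j-i=m$ fixed the decay $\Phi(m)$ gives no help and there are $\sim n$ such pairs each of size $\sim n^{2\beta}$. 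Your claimed bound $\mathrm{Var}(S_n)\lesssim E_n+n^{\beta}E_n+n^{1+2\beta}$ must then be compared with $E_n^2$, and since $E_n\leq n$ one has $E_{n_k}^2\leq (k^p+1)^2$ while $n_k\geq E_{n_k}\geq k^p$, so
\begin{equation*}
\frac{\mathrm{Var}(S_{n_k})}{E_{n_k}^2}\;\gtrsim\;\frac{n_k^{1+2\beta}}{E_{n_k}^{2}}\;\geq\;c\,k^{p(2\beta-1)},
\end{equation*}
which is not summable for any $p$ once $\beta\geq 1/2$ --- indeed it diverges. No calibration of $p$ can rescue this; the "balance" you flag at the end genuinely cannot be struck from your variance bound. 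Note that in the intended application (Proposition \ref{mainprop}) the observables are bump functions with $\|\phi_k\|_{Lip}\sim k^{\beta+1}$ and $\gamma$ can be arbitrarily small, so this is precisely the relevant regime.

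The missing idea is to split the off--diagonal pairs at distance $n^{\alpha}$ with $\alpha<\gamma/2$. For the near pairs ($j<k<j+n^{\alpha}$) one must \emph{not} use decorrelation at all: since $0\leq g_j\leq 1$, the trivial pointwise bound $g_j(T^{(j)}x)\,g_k(T^{(k)}x)\leq g_k(T^{(k)}x)$ gives a total contribution at most $2n^{\alpha}E_n$, and $2n^{\alpha}E_n/E_n^2\leq 2C^{-1}n^{\alpha-\gamma}\to 0$ because $\alpha<\gamma$. The decorrelation estimate is reserved for the far pairs ($k\geq j+n^{\alpha}$), where the error is at most $O\bigl(n^{2\beta+2}\Phi(n^{\alpha})\bigr)$, which the superpolynomial decay of $\Phi$ makes negligible. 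With this splitting your Chebyshev--Borel--Cantelli and sandwich steps go through verbatim (the paper takes $p=2$).
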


\begin{proof}
First let us remark that for the Lipschitz observables $g_{j},$ \ by the
fast convergence to equilibrium \eqref{sp} we get that%
\begin{eqnarray}
|\int g_{j}(T^{(j)}(x))d\mu _{0}-\int g_{j}d\mu | &=&|\int g_{j}d\mu
_{j}-\int g_{j}d\mu |  \label{sou} \\
&\leq &||g_{j}||_{Lip}||\mu _{j}-\mu ||_{W}\leq Bj^{\beta }\Phi (j)
\end{eqnarray}%
and since $Bj^{\beta }\Phi (j)$ is summable we get that there is $C_{2}>0$
such that%
\begin{equation*}
 \sum_{j\leq n}\int g_{j}d\mu \geq C_{2}n^{\gamma }.
\end{equation*}

Let $\gamma $ as above, consider $\alpha <\frac{\gamma }{2}$%
\begin{eqnarray*}
\int \left( \sum_{1\leq k\leq n}g_{k}(T^{(k)}(x))\right) ^{2}d\mu _{0}
&=&\sum_{1\leq j\leq n}\int (g_{j}(T^{(j)}(x)))^{2}d\mu _{0} \\
&&+2\sum_{\substack{ k,j\leq n,k>j  \\ k<j+n^{\alpha }}}\int
g_{j}(T^{(j)}(x))g_{k}(T^{(k)}(x))d\mu _{0} \\
&&+2\sum_{\substack{ k,j\leq n  \\ k\geq j+n^{\alpha }}}\int
g_{j}(T^{(j)}(x))g_{k}(T^{(k)}(x))d\mu _{0}.
\end{eqnarray*}%
Since $\forall i$, $0\leq g_{i}\leq 1$ this implies\ $%
g_{j}(T^{(j)}(x))g_{k}(T^{(k)}(x))\leq g_{k}(T^{(k)}(x))$ and 
\begin{eqnarray}
&& \nonumber \sum_{1\leq j\leq n}\int (g_{j}(T^{(j)}(x)))^{2}d\mu _{0}+2\sum_{\substack{
k,j\leq n,k>j  \\ k<j+n^{\alpha }}}\int
g_{j}(T^{(j)}(x))g_{k}(T^{(k)}(x))d\mu _{0}  \label{abv2} \\
&\leq &2n^{\alpha }\sum_{j\leq n}\int g_{j}(T^{(j)}(x))d\mu _{0}.
\end{eqnarray}%
Now let us estimate%
\begin{equation*}
\sum_{k,j\leq n,k\geq j+n^{\alpha }}\int
g_{j}(T^{(j)}(x))g_{k}(T^{(k)}(x))d\mu _{0}.
\end{equation*}%
We have%
\begin{equation*}
|\int g_{j}(T^{(j)}(x))g_{k}(T^{(k)}(x))d\mu _{0}|\leq |\int
g_{k}(x)dL^{(j+1,k)}[g_{j}d\mu _{j}]|
\end{equation*}%
where $L^{(j+1,k)}:=L_{T^{k}}\circ ...\circ L_{T^{j+1}}$.
By \eqref{sp} $$
||L^{(j+1,k)}[g_{j}\mu _{j}]-[\int g_{j}(x)d\mu _{j}]\mu ||_{W}\leq \max
(1,||g_{j}||_{Lip})\Phi (k-j-1)$$ and then%
\begin{eqnarray*}
|\int g_{k}(x)dL^{(j+1,k)}[g_{j}\mu _{j}]| &\leq &\int g_{k}(x)d\mu \int
g_{j}(x)d\mu _{j}+B^2[k]^{\beta }[j+1]^{\beta }\Phi (k-j-1) \\
&\leq &[\int g_{k}\circ T^{(k)}d\mu _{0}+B[k]^{\beta }\Phi (k)]\int
g_{j}\circ T^{(j)}d\mu _{0} \\
&&+B^2[k]^{\beta }[j+1]^{\beta }\Phi (k-j-1) \\
&\leq &\int g_{k}\circ T^{(k)}d\mu _{0}\int g_{j}\circ T^{(j)}d\mu _{0} \\
&&+B[k]^{\beta }\Phi (k)+B^2[k]^{\beta }[j+1]^{\beta }\Phi (k-j-1)
\end{eqnarray*}
using again $(\ref{sou}).$ Hence%
\begin{eqnarray}
\sum_{\substack{ k,j\leq n  \\ k\geq j+n^{\alpha }}}\int g_{j}\circ
T^{(j)}~g_{k}\circ T^{(k)}d\mu _{0} &\leq &\sum_{\substack{ k,j\leq n  \\ %
k\geq j+n^{\alpha }}}[\int g_{j}\circ T^{(j)}d\mu _{0}\int g_{k}\circ
T^{(k)}d\mu _{0}  \label{abv} \\
&& \nonumber +B[k]^{\beta }\Phi (k)+B^2 [k]^{\beta }[j+1]^{\beta }\Phi (k-j-1)] \\
&\leq &\sum_{\substack{ k,j\leq n  \\ k\geq j+n^{\alpha }}}[\int g_{j}\circ
T^{(j)}d\mu _{0}\int g_{k}\circ T^{(k)}d\mu _{0}] \\
&&+2B^2 n^{2\beta +2}\Phi (n^{\alpha })+Bn^{\beta +2}\Phi (n^{\alpha }).
\end{eqnarray}

Now consider the sequence of random variables $Z_{n}(x):=\sum_{1\leq j\leq
n}g_{j}(T^{(j)}(x))$ and denote by $E(Z_{n}):=\int \sum_{1\leq j\leq
n}g_{j}(T^{(j)}(x))d\mu _{0}(x)$ let us consider the additional sequence of
random variables 
\begin{equation*}
Y_{n}=\frac{Z_{n}}{E(Z_{n})}-1=\frac{Z_{n}-E(Z_{n})}{E(Z_{n})}.
\end{equation*}%
And since $\int \left( Z_{n}-E(Z_{n})\right) ^{2}d\mu _{0}=\int \left(
Z_{n}\right) ^{2}d\mu _{0}-(E(Z_{n}))^{2}$ we get%
\begin{eqnarray*}
E((Y_{n})^{2}) &=&\frac{\int Z_{n}^{2}d\mu _{0}-E(Z_{n})^{2}}{E(Z_{n})^{2}}
\\
&=&\frac{\int \left( \sum_{1\leq k\leq n}g_{k}(T^{(k)}(x))\right) ^{2}d\mu
_{0}-(\sum_{1\leq k\leq n}\int g_{k}(T^{(k)}(x))d\mu _{0})^{2}}{(\sum_{1\leq
k\leq n}\int g_{k}(T^{(k)}(x))d\mu _{0})^{2}} \\
&\leq &\frac{ 2n^{\alpha }\sum_{j\leq n}\int g_{j}(T^{(j)}(x))d\mu
_{0}+B^2 4n^{2\beta +2}\Phi (n^{\alpha })+2Bn^{\beta +2}\Phi (n^{\alpha })}{%
(\sum_{1\leq k\leq n}\int g_{k}(T^{(k)}(x))d\mu _{0})^{2}}
\end{eqnarray*}%
where in the last line we used $(\ref{abv})$ and $(\ref{abv2})$. By this and 
$(\ref{122})$, since $\alpha <\frac{\gamma }{2}$ we establish $%
E((Y_{n})^{2})\rightarrow 0.$ Now consider%
\begin{equation}
n_{k}=inf\{{n:\sum_{1\leq j\leq n}\int g_{j}(T^{(j)}(x))d\mu _{0}\geq k^{2}\}%
}.
\end{equation}%
\begin{eqnarray*}
E((Y_{n_{k}})^{2}) &\leq &\frac{2n_{k}^{\alpha }\sum_{j\leq n_{k}}\int
g_{j}(T^{(j)}(x))d\mu _{0}+2B^2n_{k}^{2\beta +2}\Phi (n_{k}^{\alpha
})+2Bn_{k}^{\beta +2}\Phi (n_{k}^{\alpha })}{(\sum_{1\leq j\leq n_{k}}\int
g_{j}(T^{(j)}(x))d\mu _{0})^{2}} \\
&\leq &\frac{2n_{k}^{\alpha }}{\sum_{j\leq n_{k}}\int g_{j}(T^{(j)}(x))d\mu
_{0}}+\frac{4B^2n_{k}^{2\beta +2}\Phi (n_{k}^{\alpha })}{(\sum_{1\leq j\leq
n_{k}}\int g_{j}(T^{(j)}(x))d\mu _{0})^{2}} \\
&&+\frac{2Bn_{k}^{\beta +2}\Phi (n_{k}^{\alpha })}{(\sum_{1\leq j\leq
n_{k}}\int g_{j}(T^{(j)}(x))d\mu _{0})^{2}}
\end{eqnarray*}

Since $\forall \epsilon >0$, for $n$ big enough, $\sum_{j\leq n}\int
g_{j}(T^{(j)}(x))d\mu _{0}\geq n^{\gamma -\epsilon }$ then $n_{k}\leq (k+1)^{%
\frac{2}{\gamma -\epsilon }}\leq (2k)^{\frac{2}{\gamma -\epsilon }}$ and 
\begin{equation*}
\frac{2n_{k}^{\alpha }}{\sum_{j\leq n_{k}}\int g_{j}(T^{(j)}(x))d\mu _{0}}%
\leq \frac{2(2k)^{\frac{2\alpha }{\gamma -\epsilon }}}{k^{2}}
\end{equation*}%
and since $\alpha <\frac{\gamma }{2}$, and $\epsilon $ can be taken small as
wanted, we have that 
\begin{equation*}
\sum_{k\geq 0}E((Y_{n_{k}})^{2})<\infty
\end{equation*}%
then by the classical Borel Cantelli Lemma (See  e.g. \cite{KA06})
$Y_{n_{k}}\rightarrow 0$ a.e. \ Now we
prove that the whole $Y_{n}\rightarrow 0$ a.e. Indeed if $n_{k}\leq n\leq
n_{k+1}$%
\begin{equation*}
\frac{Z_{n}}{E(Z_{n})}\leq \frac{Z_{n_{k+1}}}{E(Z_{n_{k}})}=\frac{Z_{n_{k+1}}%
}{E(Z_{n_{k+1}})}\frac{E(Z_{n_{k+1}})}{E(Z_{n_{k}})}\leq \frac{Z_{n_{k+1}}}{%
E(Z_{n_{k+1}})}\frac{(k+2)^{2}}{k^{2}}
\end{equation*}%
and%
\begin{equation*}
\frac{Z_{n}}{E(Z_{n})}\geq \frac{Z_{n_{k}}}{E(Z_{n_{k+1}})}=\frac{Z_{n_{k}}}{%
E(Z_{n_{k}})}\frac{E(Z_{n_{k}})}{E(Z_{n_{k+1}})}\geq \frac{Z_{n_{k}}}{%
E(Z_{n_{k}})}\frac{k^{2}}{(k+2)^{2}}.
\end{equation*}%
then we have $\underset{n\rightarrow \infty }{\lim }\frac{Z_{n}}{E(Z_{n})}%
=1, $ $\mu -$almost everywhere.
\end{proof}

We will use the last Lemma to prove a proposition which will be an intermediate step in proving Theorem \ref{main}. 


\begin{proposition}
\label{mainprop}Let $(X,T_{i})$ be a sequential nonautonomous system, let $\mu
_{0}\in PM(X)$ as above and  suppose there is $\mu \in PM(X)$ such that for each $g\in Lip(X)$, every $g\mu_i$ converges uniformly to $\mu$  at a superpolynomial speed as in \eqref{sp}. Let us consider a target point $y\in X$ such that assumption \eqref{preimage} is satisfied.
Then the equality 
\begin{equation*}
\underset{r\rightarrow 0}{\lim }\frac{\log \tau _{r}(x,y)}{-\log r}%
=d_{\mu }(y)
\end{equation*}%
holds for $\mu _{0}$ almost every $x$.
\end{proposition}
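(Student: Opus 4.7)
I would prove the matching one-sided inequalities
$\limsup_{r\to 0}\log\tau_r(x,y)/(-\log r)\le d$ and
$\liminf_{r\to 0}\log\tau_r(x,y)/(-\log r)\ge d$
separately, both $\mu_0$-a.s., with $d:=d_\mu(y)$. For the upper direction I would apply Lemma~\ref{unoo}; for the lower direction I would run a direct first Borel--Cantelli argument along a sparse sequence of radii.

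For the upper bound, I would apply Lemma~\ref{unoo} to the Lipschitz bumps $g_j$ satisfying $\mathbf{1}_{B(y,r_j)}\le g_j\le \mathbf{1}_{B(y,2r_j)}$ and $\|g_j\|_{Lip}\le 2/r_j$, with $r_j:=j^{-1/(d+\varepsilon)}$ for a fixed $\varepsilon>0$. The Lipschitz constants grow polynomially, and the Wasserstein convergence $\mu_j\to\mu$ (obtained from \eqref{sp} with $g\equiv 1$) combined with the local-dimension lower bound $\mu(B(y,r_j))\ge r_j^{d+\varepsilon/2}$ valid for large $j$ gives $\int g_j\,d\mu_j \ge c\,r_j^{d+\varepsilon/2}$, so $\sum_{j\le n}\int g_j\,d\mu_j\gtrsim n^{\gamma}$ with $\gamma=\varepsilon/(2(d+\varepsilon))>0$, verifying hypothesis \eqref{122}. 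The lemma then yields $\sum_{j\le n}g_j(T^{(j)}(x))\sim \sum_{j\le n}\int g_j\,d\mu_j$ a.s., so the set $S$ of times $j$ with $T^{(j)}(x)\in B(y,2r_j)$ has positive polynomial density: its cardinality in $(n/2,n]$ is at least $c'n^{\gamma}>0$ for large $n$, so $S$ meets every interval $(N,2N]$. Given any small $r$, this forces the existence of $j\in S$ with $2r_j\le r$ and $j\le 2(r/2)^{-(d+\varepsilon)}$; since $\tau_r(x,y)\le j$, one obtains $\limsup\log\tau_r/(-\log r)\le d+\varepsilon$, and letting $\varepsilon\to 0$ closes this side.

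For the lower bound, I would run first Borel--Cantelli along $r_k:=2^{-k}$ on the events $F_k:=\{\tau_{r_k}(x,y)<N_k\}$ with $N_k:=\lfloor r_k^{-(d-\varepsilon)}\rfloor$. A union bound gives $\mu_0(F_k)\le \sum_{j<N_k}\mu_j(B(y,r_k))$, which I would split at a threshold $J_k$. For $j>J_k$, a Lipschitz approximation of $\mathbf{1}_{B(y,r_k)}$ plus the Wasserstein bound $\|\mu_j-\mu\|_W\le \Phi(j)C_{\mu_0}$ coming from \eqref{sp} gives $\mu_j(B(y,r_k))\le \mu(B(y,2r_k))+(2C_{\mu_0}/r_k)\Phi(j)$, whose total contribution is at most $N_k\mu(B(y,2r_k))+(2C_{\mu_0}/r_k)\sum_{j>J_k}\Phi(j)$; the first summand is controlled by the local-dimension estimate $\mu(B(y,2r_k))\le r_k^{d-\varepsilon'}$ with $\varepsilon'<\varepsilon$, and the second by the superpolynomial decay of $\Phi$ past $J_k$. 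For $j\le J_k$, the preimage assumption \eqref{preimage} gives $\mu_j(\{y\})=0$ and hence $\mu_j(B(y,r))\to 0$ as $r\to 0$ for each such $j$, so choosing $J_k$ to grow slowly enough makes the burn-in contribution $J_k\max_{j\le J_k}\mu_j(B(y,r_k))$ summable as well. Borel--Cantelli then delivers $\tau_{r_k}(x,y)\ge N_k$ eventually $\mu_0$-a.s., and monotonicity of $\tau_r$ in $r$ upgrades the conclusion to $\liminf_{r\to 0}\log\tau_r/(-\log r)\ge d-\varepsilon$, hence to $\ge d$ by sending $\varepsilon\to 0$.

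The main difficulty will be the calibration in the lower-bound Borel--Cantelli: the Wasserstein--Lipschitz error is of order $(1/r_k)\Phi(j)$ per iterate, so its naive cumulative contribution over $j\le N_k$ is of order $1/r_k\to\infty$. Taming this requires pushing $J_k$ large enough that the superpolynomial tail $\sum_{j>J_k}\Phi(j)$ beats the $1/r_k$ prefactor, while simultaneously keeping the burn-in term summable using only the qualitative vanishing $\mu_j(B(y,r))\to 0$ provided by \eqref{preimage}, with no a~priori rate. Finding a joint choice of $J_k$ and $r_k=2^{-k}$ that balances these competing demands is the delicate technical step; the upper bound, by contrast, reduces cleanly to checking the hypotheses of Lemma~\ref{unoo} for observables whose Lipschitz norms grow like $j^{1/(d+\varepsilon)}$.
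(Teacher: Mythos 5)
Your high-level split (upper bound via Lemma~\ref{unoo}, lower bound via first Borel--Cantelli) matches the paper's, but both halves have a genuine gap in the execution. For the upper bound, the step ``the cardinality of $S$ in $(n/2,n]$ is at least $c'n^{\gamma}$, so $S$ meets every interval $(N,2N]$'' does not follow from $Z_n/E(Z_n)\to 1$. The local dimension hypothesis only gives $\mu(B(y,r))=r^{d+o(1)}$, so with $r_j=j^{-1/(d+\varepsilon)}$ you can only bound $E(Z_n)$ between roughly $n^{\gamma-\delta}$ and $n^{2\gamma+\delta}$; the increment $E(Z_n)-E(Z_{n/2})$ is guaranteed to be only of order $n^{\gamma}$, while the a.s.\ error $Z_n-E(Z_n)=o(E(Z_n))$ can be as large as $o(n^{2\gamma})$ and swamp it. A dyadic window is too narrow. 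The paper's proof is built to avoid exactly this: it argues by contradiction that $\limsup\ge d+\epsilon'$ forces $Z_n=Z_{n^{\beta(d+\epsilon')}}$ for infinitely many $n$, and compares expectations across the \emph{polynomially} wider window $[n,\,n^{\beta(d+\epsilon')}]$, where $E(Z_{n^{\beta(d+\epsilon')}})/E(Z_n)\to\infty$ holds despite the $n^{O(\epsilon)}$ uncertainty. Your argument can be repaired by replacing $(N,2N]$ with $(N,N^{1+\theta}]$ for suitable $\theta>0$, but as written it is incomplete.

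For the lower bound, the difficulty you flag at the end is not a mere calibration issue: it is fatal to the union-bound strategy. To beat the $1/r_k=2^{k}$ prefactor on $\sum_{j>J_k}\Phi(j)$ you need $J_k$ to grow exponentially in $k$, and then the burn-in term $\sum_{j\le J_k}\mu_j(B(y,r_k))$ has exponentially many summands controlled only by the qualitative statement $\mu_j(B(y,r))\to 0$ with no rate; no choice of $J_k$ satisfies both constraints, and even reorganizing the double sum requires $\sum_k\mu_j(B(y,2^{-k}))<\infty$ for fixed $j$, which \eqref{preimage} does not give. The paper sidesteps the union bound entirely: it runs a \emph{diagonal} Borel--Cantelli on the single events $\{d(T^{(k)}(x),y)\le r_k\}$, so the relevant series is $\sum_k\int\phi_k\,d\mu_k\le\sum_k\bigl(\mu(B(y,r_{k-1}))+\|\phi_k\|_{Lip}\Phi(k)\bigr)$, with one term per $k$ and no burn-in. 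The hypothesis \eqref{preimage} is then used pointwise, not quantitatively: for a.e.\ $x$ (those never mapped onto $y$) one has $\min_{i\le k}d(T^{(i)}(x),y)>0$, and the footnote argument shows that $\tau_{r_k}(x,y)\le k$ for infinitely many $k$ already forces $d(T^{(k)}(x),y)\le r_k$ infinitely often, which happens only on a null set. You should adopt this diagonal formulation; your version of the lower bound cannot be closed as proposed.
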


This proposition is of independent interest since directly establishes the logarithm law. However the assumption required (see \eqref{sp}) may look quite technical and difficult to verify. For this we decided to state the main result in the form shown at Theorem \ref{main}, whose assumptions are more similar to what is expected to be  established in concrete examples, where the maps involved are already known to satisfy some regularization (Lasota Yorke) inequalities on certain functional spaces, and some convergence to equilibrium properties.


\begin{proof}[Proof of Proposition \protect\ref{mainprop}]
Let $r_{k}\rightarrow 0$ be a decreasing sequence, let $B(y,r_{k})$ be a
sequence of balls with decreasing radius centered at $y$, \ let $\phi
_{k}$ be a Lipschitz function such that $\phi _{k}(x)=1$ for all $x\in
B(y,r_{k})$, $\phi _{k}(x)=0$ if $x\notin B(y,r_{k-1})$ and $||\phi
_{k}||_{Lip}\leq \frac{1}{r_{k-1}-r_{k}}$ (such functions can be constructed
as $\phi _{k}(x)=h(d(y,x))$ where $h$ is a piecewise linear Lipschitz
function $\mathbb{R\rightarrow }[0,1]$).

First we prove that $\underset{r\rightarrow 0}{\lim \inf }\frac{\log \tau
_{r}(x,y)}{r}\geq d_{\mu }(y)$, $\mu _{0}$ almost everywhere. This
follows by a classical Borel-Cantelli argument.

Let us suppose that for some $\epsilon >0,$ $\underset{r\rightarrow 0}{\lim
\inf }\frac{\log \tau _{r}(x,y)}{-\log r}\leq d_{\mu }(y)-\epsilon $
on \ a certain set $A\subseteq X$. \ Let us consider the sequence $%
r_{k}=k^{-(d_{\mu }(y)-\epsilon )^{-1}}$. From the properties of
logarithms,\ it is standard to get (see the beginning of the proof of
Theorem 4 of \cite{G}) $\underset{r\rightarrow 0}{\lim \inf }\frac{\log \tau
_{r}(x,y)}{-\log r}=\underset{k\rightarrow \infty }{\lim \inf }\frac{%
\log \tau _{r_{k}}(x,y)}{-\log r_{k}}.$ Hence $\underset{r\rightarrow 0}{%
\lim \inf }\frac{\log \tau _{r}(x,y)}{-\log r}\leq d_{\mu
}(y)-\epsilon $ implies that $\frac{\log \tau _{r_{k}}(x,y)}{-\log
r_{k}}\leq d_{\mu }(y)-\epsilon $ for infinitely many $k$'s.

We have that for each $\epsilon ^{\prime }>0$ eventually when $k$ is large enough $$\int \phi
_{k} d\mu \leq (k-1)^{-(d_{\mu }(y)-\epsilon )^{-1}d_{\mu }(y)-\epsilon
^{\prime }}.$$ If $\epsilon ^{\prime }$ is so small that $(d_{\mu
}(y)-\epsilon )^{-1}d_{\mu }(y)-\epsilon ^{\prime }>1,~$then $%
\sum_{k}\int \phi _{k}d\mu <\infty .$ \ Let us now consider the sequence $%
\phi _{k}\circ T^{(k)}$ and let us estimate $\int \phi _{k}\circ
T^{(k)} d\mu _{0}.$ \ We have eventually as $k\rightarrow \infty $\ that%
\begin{eqnarray*}
|\int \phi _{k}\circ T^{(k)}d\mu _{0}-\int \phi _{k}d\mu | &=&|\int \phi
_{k}d\mu _{k}-\int \phi _{k}d\mu | \\
&\leq &||\phi _{k}||_{Lip}||\mu _{k}-\mu ||_{W} \\
&\leq &k^{\beta }\Phi (k)
\end{eqnarray*}%
where $\beta >\lim_{k\rightarrow \infty }\frac{\log \frac{1}{r_{k-1}-r_{k}}}{%
\log k}$, and since $k^{\beta }\Phi (k)$ is summable we get that for each
such $\epsilon >0$%
\begin{equation*}
\sum_{k\leq n}\int \phi _{k}\circ T^{(k)}d\mu _{0}<\infty .
\end{equation*}%
This means that the set of $x\in X$ for which $\sum_{k}\phi
_{k}(T^{(k)}(x))=\infty $ is a zero $\mu _{0}$-measure set. This set
includes the set of $x$ such that $d(T^{(k)}(x),y)\leq r_{k}$ infinitely
many times  and \ the set of $x\in X$ such that $\forall i$ $T^{(i)}(x)\neq y$ and for infinitely many $k$, $\tau_{r_{k}}(x,y)\leq k=r_{k}^{-(d_{\mu }(y)-\epsilon )}$ proving that $A$ is a zero $\mu _{0}$-measure one \footnote{If $\forall i$ $T^{(i)}(x)\neq y$ and $\tau_{r_{k}}(x,y)\leq k$ for infinitely many $k$, we have infinitely many $k$ for which $d(T^{(k)}(x),y)\leq r_k$. Indeed assuming the opposite. Let us consider $k$ to be the last index for which  $d(T^{(k)}(x),y)\leq r_k$. Since $min_{i\leq k}d(T^{(i)}(x),y)>0$ we can  consider $k'>k$ such that $0<r_{k'}<min_{i\leq k}d(T^{(i)}(x),y)$ since still we have   $\tau_{r_{k'}}(x,y)\leq k'$ we have a new close approach to the target, negating the asumption.}.

Now we prove that 
\begin{equation}
\underset{r\rightarrow 0}{\lim \sup }\frac{\log \tau _{r}(x,y)}{r}\leq
d_{\mu }(y)  \label{sec}
\end{equation}%
$\mu _{0}$ almost everywhere. Let us consider some small $\epsilon ^{\prime
}>0$ and the set of $x$ such that $\underset{r\rightarrow 0}{\lim \sup }%
\frac{\log \tau _{r}(x,y)}{r}\geq d_{\mu }(y)+\epsilon ^{\prime }.$
In order to estimate the measure of such set, let us consider some $0<\beta <%
\frac{1}{d_{\mu }(y)}$ (implying $\beta d_{\mu }(y)<1$) \ such that $%
\beta (d_{\mu }(y)+\epsilon ^{\prime })>1$. Consider then the sequence
of radii $r_{k}=k^{-\beta }$. We remark that as before, if $(\ref{sec})$ is
proved for such a sequence, then it holds for all sequences. Now remark that
for each small $\epsilon <\beta ^{-1}-d_{\mu }(y)$, eventually as $k\to \infty$, $\int
\phi _{k}d\mu \geq (r_{k})^{d_{\mu }(y)+\epsilon }=k^{-\beta (d_{\mu
}(y)+\epsilon )}$ and there is $C>0$ such that 
\begin{equation}
\sum_{0}^{k}\int \phi _{k}d\mu \geq Ck^{1-\beta (d_{\mu }(y)+\epsilon )}
\label{232}
\end{equation}%
for each $k\geq 0.$ Now let us estimate the sequence $\int \phi _{k}\circ
T^{(k)}d\mu _{0}.$ We have as before that eventually, as $k\rightarrow
\infty $%
\begin{eqnarray*}
|\int \phi _{k}\circ T^{(k)}d\mu _{0}-\int \phi _{k}d\mu | &=&|\int \phi
_{k}d\mu _{k}-\int \phi _{k}d\mu | \\
&\leq &||\phi _{k}||_{Lip}||\mu _{k}-\mu ||_{W} \\
&\leq &k^{\beta ^{\prime }}\Phi (n)
\end{eqnarray*}%
for some $\beta ^{\prime }>\lim_{k\rightarrow \infty }\frac{\log \frac{1}{%
r_{k-1}-r_{k}}}{\log k}$ since $k^{\beta ^{\prime }}\Phi (n)$ is summable we
get by $(\ref{232})$ that for each $\epsilon >0, $ eventually 
\begin{equation*}
\sum_{k\leq n}\int \phi _{k}\circ T^{(k)}d\mu _{0}\geq n^{1-\beta d_{\mu
}(y)- \beta \epsilon }.
\end{equation*}
We can then apply Lemma \ref{unoo} and obtain that setting $Z_{n}(x)=\sum_{j\leq n}\phi _{j}(T^{(j)}(x))$
and $E(Z_{n})=\int \sum_{j\leq n}\phi _{j}(T^{(j)}(x))d\mu _{0}$, for such a
sequence, $\underset{n\rightarrow \infty }{\lim }\frac{Z_{n}}{E(Z_{n})}=1$ $%
\mu _{0}-$almost everywhere. We are now going to use this to complete the
proof. Let us hence still consider  $\beta $ as above, near but below $\frac{1%
}{d_{\mu }(y)}$ and $\epsilon ^{\prime }>0$ such that $\beta (d_{\mu
}(y)+\epsilon ^{\prime })>1$. Let us consider $x$ such that $\underset{%
r\rightarrow 0}{\lim \sup }\frac{\log \tau _{r}(x,y)}{r}\geq d_{\mu }(y)+\epsilon ^{\prime }$
then, for infinitely many $n$, $\tau _{(n-1)^{-\beta
}}(x,y)\geq (n-1)^{\beta (d_{\mu }(y)+\epsilon ^{\prime })},$ then $%
T^{(i)}(x)\notin B(y,(n-1)^{-\beta })$ \ \ for each $0\leq i\leq (n-1)^{\beta
(d_{\mu }(y)+\epsilon ^{\prime })}$ and in particular $%
T^{(i)}(x)\notin B(y,(i-1)^{-\beta })$ for $n\leq i\leq (n-1)^{\beta (d_{\mu
}(y)+\epsilon ^{\prime })}$ \ which implies $Z_{n}(x)=Z_{n^{\beta
(d_{\mu }(y)+\epsilon ^{\prime })}}(x)$ for infinitely many $n.$ But 
\begin{equation*}
\frac{\sum_{i=0}^{n^{\beta (d_{\mu }(y)+\epsilon ^{\prime })}}\int
\phi _{j}d\mu }{\sum_{i=0}^{n}\int \phi _{j}d\mu }\geq \frac{%
\sum_{i=0}^{n^{\beta (d_{\mu }(y)+\epsilon ^{\prime })}}\mu
(B(y,i^{-\beta }))}{\sum_{i=0}^{n}\mu (B(y,(i-1)^{-\beta }))}%
\rightarrow \infty
\end{equation*}%
eventually as $n\rightarrow \infty $ because $\beta d_{\mu }(y)<1$, implying that the above sums go to $\infty$ and because $\beta
(d_{\mu }(y)+\epsilon ^{\prime })>1$, implying that the numerator's sum goes to $\infty$ faster than the denominator's one. Then as shown before%
\begin{equation*}
\frac{E(Z_{n^{\beta (d_{\mu }(y)+\epsilon ^{\prime })}})}{E(Z_{n})}=%
\frac{\sum_{j=0}^{n^{\beta (d_{\mu }(y)+\epsilon ^{\prime })}}\int
\phi _{j}(T^{(j)}(x))d\mu _{0}}{\sum_{j=0}^{n}\int \phi
_{j}(T^{(j)}(x))d\mu _{0}}\rightarrow \infty
\end{equation*}%
hence in order to get $\underset{n\rightarrow \infty }{\lim }\frac{Z_{n}}{%
E(Z_{n})}=1$ $\ \mu _{0}$-almost everywhere\ one must have $\underset{%
r\rightarrow 0}{\lim \sup }\frac{\log \tau _{r}(x,y)}{r}\geq d_{\mu
}(y)+\epsilon ^{\prime }$ on a zero measure set. Since $\beta $ can be
chosen as near as we want to $\frac{1}{d_{\mu }(y)}$, $\epsilon ^{\prime
}$ can be chosen to be arbitrary small we have the statement.
\end{proof}

Now we see that the assumptions of Theorem \ref{main} implies the ones of Proposition \ref{mainprop} and then we can get our main result applying the proposition.

\begin{lemma}
\label{bomu}Given a probability measure $\mu _{0}\in B_{s}$, let us suppose that the set $A:=\{\mu _{k}:=L^{(k)}\mu_0,k\in 
\mathbb{N}\}$ is bounded in $B_s$ and has uniformly bounded Lipschitz multipliers in the sense of Definition \ref{def1}. Suppose  $(X,T_{i})$ has
convergence to equilibrium with superpolynomial speed in the sense of Definition \ref{def2} and there is $C\geq 0$
such that $||~||_{W}\leq C||~||_{s}$, then  for each $g\in
Lip(X),$ $i\in \mathbb{N}$ the measure $g\mu _{i}$ converges to $\mu \int
gd\mu _{i}$ at a superpolynomial speed as expressed in $(\ref{sp}).$
\end{lemma}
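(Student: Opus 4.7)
My plan is to deduce the bound (sp) from Definition \ref{def1} by reducing the signed measure $g\mu_j$ to a probability measure through an additive shift. Since (sp) is linear in $g\mu_j$ and homogeneous in $\|g\|_{Lip}$ through the factor $\max(1,\|g\|_{Lip})$, I first normalize to the case $\|g\|_{Lip}\leq 1$; the subcase of constant $g$ is immediate, since then $L^{(j+1,j+k)}[g\mu_j] = g\,\mu_{j+k}$ and $\mu\int g\,d\mu_j = g\mu$, and $\|\mu_{j+k}-\mu\|_W$ is already controlled by Definition \ref{def1} applied to $\mu_j$. The main obstacle is that the convergence to equilibrium is a statement about probability measures, whereas $g\mu_j$ may be signed and its total mass $\int g\,d\mu_j$ may even be zero, so the normalization cannot be done by simply dividing by total mass.

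To get around this, I introduce the shifted function $\tilde g := g + 2$, which satisfies $1\leq \tilde g\leq 3$ and $\|\tilde g\|_{Lip}\leq 3$. The uniformly bounded Lipschitz multiplier hypothesis (Definition \ref{def2}) applied to $\mu_j\in A$ gives $\tilde g\mu_j\in B_s$ with $\|\tilde g\mu_j\|_s\leq 3C_A$. Since $\tilde g\mu_j$ is positive with total mass in $[1,3]$, the normalized probability measure $\nu := \tilde g\mu_j / \int \tilde g\, d\mu_j$ belongs to $B_s$ with $\|\nu\|_s\leq 3C_A$. Because $A$ is bounded in $B_s$ and $\mu\in B_s$, the distance $\|\mu-\nu\|_s$ is bounded by a constant $K$ that is independent of $g$ and $j$; the same is true of $\|\mu-\mu_j\|_s$.

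Now apply Definition \ref{def1} to the probability measure $\nu\in B_s$ with starting index $j+1$ and ending index $j+k$, obtaining $\|\mu - L^{(j+1,j+k)}\nu\|_W \leq \Phi(k-1)\max(1,K)$. Multiplying by $\int\tilde g\,d\mu_j \leq 3$ and using the linearity of $L^{(j+1,j+k)}$ together with the identities $\tilde g\mu_j = g\mu_j + 2\mu_j$, $L^{(j+1,j+k)}\mu_j = \mu_{j+k}$, and $\int \tilde g\,d\mu_j = \int g\,d\mu_j + 2$ gives
\begin{equation*}
\left\|\mu\textstyle\int g\,d\mu_j - L^{(j+1,j+k)}[g\mu_j] + 2(\mu-\mu_{j+k})\right\|_W \leq 3(1+K)\Phi(k-1).
\end{equation*}
Applying Definition \ref{def1} a second time, now directly to $\mu_j\in B_s$, yields $\|\mu-\mu_{j+k}\|_W \leq \Phi(k-1)\max(1,K)$, so a triangle inequality produces a bound of the form $\|L^{(j+1,j+k)}[g\mu_j] - \mu\int g\,d\mu_j\|_W \leq C'\,\Phi(k-1)$ for a constant $C'$ depending only on $C_A$, $\|\mu\|_s$, and $\sup_k\|\mu_k\|_s$.

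Finally, rescaling back from the $\|g\|_{Lip}\leq 1$ normalization reinserts the factor $\max(1,\|g\|_{Lip})$, and defining $\Phi'(k) := C'\,\Phi(\max(k-1,0))$, which is still superpolynomially decreasing, gives exactly the bound (sp). The only nontrivial step is the reduction to a probability measure via the additive shift: choosing the shift large enough to guarantee a uniform positive lower bound on $\int\tilde g\,d\mu_j$ is what makes the normalized measure $\nu$ have $s$-norm controlled independently of $g$ and $j$, and thereby makes the right-hand side of Definition \ref{def1} uniform in the parameters we are iterating over.
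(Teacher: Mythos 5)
Your argument is correct, and it uses the same two ingredients as the paper's proof (the convergence to equilibrium of Definition \ref{def1} and the uniformly bounded Lipschitz multipliers together with the boundedness of $A$), but it is organized differently in a way that is worth noting. The paper applies the inequality of Definition \ref{def1} \emph{directly} to the measure $g\mu_j$, reading it as $\|L^{(j+1,j+k)}[g\mu_j]-\mu\int g\,d\mu_j\|_W\leq \Phi(k)\max(1,\|g\mu_j-\mu\int g\,d\mu_j\|_s)$, and then bounds the strong norm on the right by $(C_A+C_2)\|g\|_{Lip}$; this is shorter, but strictly speaking Definition \ref{def1} is stated only for \emph{probability} measures $\mu_0$, and the $\max(1,\cdot)$ on the right prevents one from extending it to general elements of $B_s$ by pure linearity, so the paper's one-line application involves an implicit extension that is not justified in the text. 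Your additive-shift-and-renormalize reduction ($\tilde g=g+2$, $\nu=\tilde g\mu_j/\int\tilde g\,d\mu_j$, followed by a second application of Definition \ref{def1} to $\mu_j$ to absorb the term $2(\mu-\mu_{j+k})$) is precisely the missing bridge: it reduces the statement for an arbitrary Lipschitz $g$ to two applications of the hypothesis to genuine probability measures whose $s$-norms are controlled uniformly in $j$ and $g$ by $C_A$ and the bound on $A$. So your proof is a more careful version of the paper's; the only cost is the bookkeeping with the constants and the shift from $\Phi(k)$ to $C'\Phi(k-1)$, which, as you observe, is harmless since the resulting $\Phi'$ is still superpolynomially decreasing, which is all that Lemma \ref{unoo} requires.
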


\begin{proof}
Since $A$ is bounded  there is a $C_{2}\geq 0$ s.t. \ $||\mu _{j}||_{s}\leq
C_{2}$ $\forall j$ and $||\mu ||_{s}\leq C_{2}.$
In order to prove $(\ref{sp})$, from the convergence to equilibrium we have 
\begin{eqnarray*}
||L_{T_{j+k}}\circ ...\circ L_{T_{j+1}}[g\mu _{j}]-\mu \int gd\mu
_{j}||_{W} &\leq &\Phi (k) max(1,||[g\mu _{j}]-\mu \int gd\mu _{j}||_{s}).
\end{eqnarray*}

By the bounded Lipschitz multiplier property $$||[g\mu _{j}]-\mu \int gd\mu _{j}||_{s}\leq C_{A}||g||_{Lip}+C_{2}||g||_{Lip}$$ since $||g||_{\infty }\leq ||g||_{Lip}$ 
and $\mu
_{j} $ is a probability measure. We have then 
\begin{equation*}
||L_{T_{j+k}}\circ ...\circ L_{T_{j+1}}[g\mu _{j}]-\mu \int g_{}d\mu
_{j}||_{W}\leq \Phi'(k) \max(1,||g||_{Lip})
\end{equation*}%
for a superpolinomially decreasing $\Phi'$
as required by $(\ref{sp}).$
\end{proof}

Having collected the necessary results, we can now prove the main theorem.

\begin{proof}[Proof of Theorem \ref{main}]
By Lemma \ref{bomu} we see that the assumptions of Theorem \ref{main} imply the assumptions of Proposition \ref{mainprop}.
The application of this proposition directly lead to the result.
\end{proof}

\section{Application to asymptotically autonomous systems\label{yp}}
In this section we show an example of application of Theorem \ref{main} to a family of solenoidal  maps forming a nonautonomous system.
Such a family is also an eventually autonomous system in the sense of \cite{Ash}.
Solenoidal maps are known to have a fractal attractor whose dimension can vary, depending on the map's contraction and expansion rates (see Figure 1).

\begin{figure}[h]
    \centering
    \includegraphics[width=0.35\textwidth]{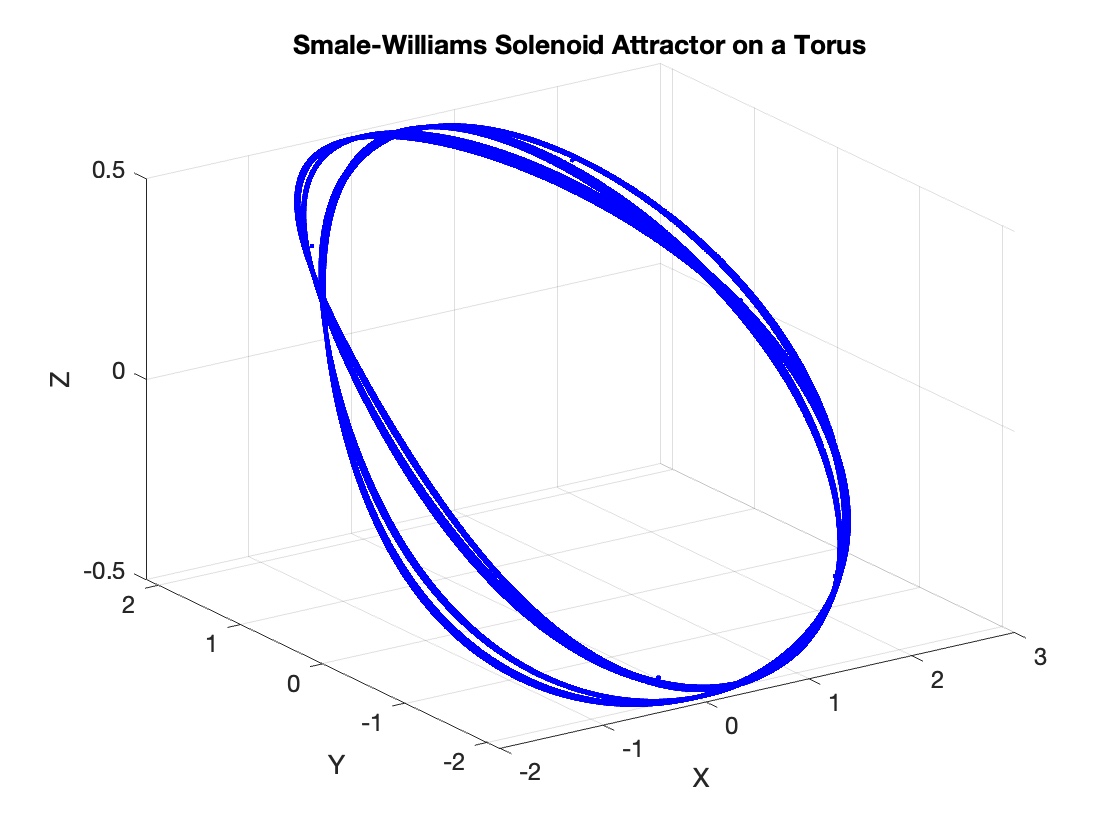}
    \caption{Example of a typical attractor in the phase space $\mathbb{S}^{1}\times D^{2}$ for a solenoidal map.}
\end{figure}

The same can be said for the local dimension of the unique physical invariant measure. To keep the treatment short and avoid technicalities, we choose a relatively simple family of such maps where the maps vary in time only on the second coordinate.
We therefore consider a family $F_i$ of solenoidal maps. Each element of $F_i$ is a $C^{2}$ map $F_i:X\rightarrow X$ where $X=\mathbb{S}^{1}\times
D^{2}$ the filled torus, and $F_i$ is a skew product 
\begin{equation}
F_i(x,y)=(T(x),G_i(x,y)),  \label{1eq}
\end{equation}%
where $T: \mathbb{S}^{1} \longrightarrow \mathbb{S}^{1}$ and $G_i:X\longrightarrow D^{2}$ are smooth maps. We suppose the map $T:\mathbb{S}^{1}\rightarrow \mathbb{S}^{1}$ to be $C^{3}$, expanding \footnote{There is $\alpha 
 <1$ such that $\forall x \in  \mathbb{S}^{1}$, $|T_{0}^{\prime }(x)|\geq \alpha ^{-1}>1$.} of degree $q$, giving rise to a map $[0,1]\rightarrow
\lbrack 0,1]$, which we denote by $\Tilde{T}$ \ and
whose branches will be denoted by $\Tilde{T}_{i}$, $i\in \lbrack 1,..,q]$. We
make the following assumptions on the $G_i:$

\begin{description}
\item[(a)] Consider the $F$-invariant foliation $\mathcal{F}^{s}:=\{\{x\}\times
D^{2}\}_{x\in S^{1}}$. We suppose that $\mathcal{F}^{s}$ is contracted:
there exists $0<\alpha <1$ such that for all $x\in \mathbb{S}^{1}, i\in \mathbb{N}$ 
\begin{equation}
|G_i(x,y_{1})-G_i(x,y_{2})|\leq \alpha |y_{1}-y_{2}|\ \ \mathnormal{for\ all}\ \
y_{1},y_{2}\in D^{2}.  \label{contracting1}
\end{equation}

\item[(b)] $\sup_{x\in S^1,i\in \mathbb{N}}|\frac{\partial G_i}{\partial x}(x)|<\infty .$

\item[(c)] $\sup_{x,y}|G_i(x,y)-G_0(x,y)| \leq \Phi(i)$ with $\Phi$ being decreasing and having superpolynomial decay.
\end{description}

In the following, applying the theory we have shown in the previous sections, we will prove the logarithm law for this system:

\begin{proposition}\label{solw}
Let $(X,F_{i})$ be a sequential family of solenoidal maps satisfying the above assumptions $(a),(b),(c)$, let $\mu_0$ be the Lebesgue measure on $X$ and $\mu$ the unique physical measure of $F_0$.

Suppose $y\in X$ is such that the local dimension $d_{\mu }(y)$ \
of $\mu $ at $y$ exists, then the equality 
\begin{equation*}
\underset{r\rightarrow 0}{\lim }\frac{\log \tau _{r}(x,y)}{-\log r}%
=d_{\mu }(y)
\end{equation*}%
holds for $\mu _{0}$ almost every $x$.
   
\end{proposition}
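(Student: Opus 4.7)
The plan is to derive Proposition \ref{solw} as an application of Theorem \ref{main}, which requires identifying a suitable Banach space $B_s$ of measures on $X=\mathbb{S}^1\times D^2$ and verifying: (i) boundedness of the iterates $\mu_k=L^{(k)}\mu_0$ in $B_s$; (ii) the uniformly bounded Lipschitz multiplier condition; (iii) superpolynomial convergence to equilibrium; and (iv) the preimage condition \eqref{preimage}. The natural candidate for $B_s$ is the space of signed Borel measures whose base projection $\pi_\ast\nu$ (where $\pi:X\to\mathbb{S}^1$) has bounded variation density, equipped with a norm combining the total variation of $\nu$ with the BV norm of $\pi_\ast\nu$. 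The bound $\|\cdot\|_W\leq C\|\cdot\|_s$ then holds trivially.

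The key structural observation is that \emph{all} maps $F_i$ share the same base map $T$, so base projections evolve autonomously: $\pi_\ast\mu_k=\bar L_T^{\,k}(\pi_\ast\mu_0)$, where $\bar L_T$ is the Perron--Frobenius operator of the $C^3$ expanding map $T$ of degree $q$. A standard Lasota--Yorke inequality then supplies a spectral gap for $\bar L_T$ on $BV(\mathbb{S}^1)$, giving both the uniform bound on $\{\pi_\ast\mu_k\}$ (hence on $\{\mu_k\}$ in $B_s$) and exponential convergence of base projections to the ACIM of $T$. The bounded Lipschitz multiplier condition follows because multiplying by $\phi\in\mathrm{Lip}(X)$ leaves the base BV structure controllable by $\|\phi\|_{Lip}$ times the diameter of $D^2$ and the BV norm of the original projection. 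Condition \eqref{preimage} is immediate since $T$ has finite degree $q$, so the skew products $F_i$ have finite (hence countable) preimages and $\mu_0=\mathrm{Leb}_X$ is nonatomic.

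To obtain superpolynomial convergence to equilibrium I would proceed in two steps. First, for the \emph{autonomous} system $F_0$, the combination of the base spectral gap with the uniform fiber contraction $\alpha<1$ in (a) yields exponential convergence $\|L_{F_0}^{\,k}\mu_0-\mu\|_W\leq C e^{-\lambda k}$ by a classical skew-product argument (the fiber contraction exponentially concentrates conditionals onto the attractor, while BV convergence handles the base). Second, I compare the sequential composition $F^{(k)}$ to $F_0^{\,k}$ pointwise: since both have the same base trajectory $T^k(x)$, only the fiber coordinates $a_k,b_k$ differ, and assumptions (a),(c) give the recursion
\[
|a_k-b_k|\leq \alpha|a_{k-1}-b_{k-1}|+\Phi(k),
\]
whence $|a_k-b_k|\leq \sum_{j=1}^k\alpha^{k-j}\Phi(j)$. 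Splitting the sum at $k/2$ and using that $\Phi$ is superpolynomial and $\alpha<1$, the right-hand side is superpolynomial in $k$. Consequently $\|L^{(k)}\mu_0-L_{F_0}^{\,k}\mu_0\|_W$ is superpolynomial (uniformly in $\mu_0$), and the triangle inequality yields the required superpolynomial convergence $\|L^{(k)}\mu_0-\mu\|_W\to 0$. The same argument, run on the tail composition $L^{(j,k)}$ starting from $\mu_j\in A$, gives the uniform Definition \ref{def1} form. With all hypotheses of Theorem \ref{main} verified, the conclusion follows.

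The main obstacle I expect is the technical bookkeeping in Step (iii): producing the convergence bound in the precise form $\Phi(k-j)\max(1,\|\mu-\mu_0\|_s)$ demanded by Definition \ref{def1}. The exponential convergence for $F_0$ typically comes with a constant depending on the $B_s$-norm of the initial condition, while the sequential-versus-autonomous comparison is uniform in $\mu_0$; these two types of bounds must be combined correctly, and one must also ensure that the chosen $B_s$-norm simultaneously accommodates the Lasota--Yorke regularization in the base, the Lipschitz multiplier inequality, and the comparison with $\|\cdot\|_W$. Once these compatibilities are arranged, the application of Theorem \ref{main} is immediate.
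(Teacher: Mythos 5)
Your overall strategy coincides with the paper's: both apply Theorem \ref{main}, both exploit the fact that all the $F_i$ share the same expanding base map $T$, and both reduce the sequential convergence to equilibrium to the autonomous convergence for $F_0$ plus a perturbation estimate coming from assumption (c). Your pointwise orbit-comparison (the recursion $|a_k-b_k|\leq\alpha|a_{k-1}-b_{k-1}|+\Phi(k)$, summed and split) is a legitimately more elementary substitute for the paper's abstract operator-norm perturbation machinery (Lemmas \ref{XXX} and \ref{losmem}), and for this particular family it does yield a superpolynomial bound on $\Vert L^{(j,k)}\mu_0-L_{F_0}^{k-j}\mu_0\Vert_W$ depending only on $k-j$. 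That part of the plan is sound.

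The genuine gap is in your choice of strong space and, consequently, in the verification of the uniformly bounded Lipschitz multiplier condition (Definition \ref{def2}), which is the hypothesis that actually makes Theorem \ref{main} run (via Lemma \ref{bomu} it is what turns convergence to equilibrium into the decorrelation estimate \eqref{sp} for the observables $g\mu_j$). A norm recording only the total variation of $\nu$ and the BV norm of the base marginal $\pi_\ast\nu$ does not control $\pi_\ast(\phi\nu)$ for $\phi\in\mathrm{Lip}(X)$: the density of $\pi_\ast(\phi\nu)$ at $l$ is $\int_{D^2}\phi(l,y)\,d\nu|_{\gamma_l}(y)$ (up to the marginal density factor), and its variation in $l$ involves not only $\partial_x\phi$ but also the modulus of continuity of the conditional measures $l\mapsto\nu|_{\gamma_l}$. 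A measure with smooth base marginal but conditionals $\delta_{\psi(l)}$ for a wildly oscillating $\psi$ shows that $\Vert\pi_\ast(\phi\nu)\Vert_{BV}$ cannot be bounded by $\Vert\phi\Vert_{Lip}$ times your proposed norm of $\nu$. This is precisely why the paper introduces the anisotropic norms $\Vert\cdot\Vert_{"1"}$ and $\Vert\cdot\Vert_{"W^{1,1}"}$ built from the disintegration along the stable leaves, proves a Lasota--Yorke inequality for $\Vert\cdot\Vert_{"W^{1,1}"}$ (Proposition \ref{LYYY}, Corollary \ref{trima}) to get a \emph{uniform} bound on the transverse regularity $D(\mu_i,l)$ of the iterates of Lebesgue, and only then deduces the multiplier bound in Lemma \ref{mult}. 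Without some such control on how $\mu_i|_{\gamma_l}$ varies with $l$, your verification of (ii) does not close, and the decorrelation needed in Lemma \ref{unoo} is not available. A minor additional slip: the preimage condition \eqref{preimage} does not follow from countability of preimages (the fiber maps $G_i(x,\cdot)$ are contractions and may fail to be injective, so a point can have uncountably many preimages inside a fiber); it holds because the preimage set of any point is contained in finitely many fibers $\{x\}\times D^2$, each Lebesgue-null in $X$.
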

We will prove Proposition \ref{solw} by applying Theorem  \ref{main}.
In order to do this, we construct some functional spaces which are suitable for the system we
consider. 
Functional spaces adapted to uniformly hyperbolic systems like solenoidal maps have been studied in \cite{GL06}.
Here we use a simpler construction of anisotropic spaces suitable for skew products which can be found in \cite{GalJEP} and \cite{GalLuc}. The idea is to consider spaces of measures with sign, with
suitable norms constructed by disintegrating the measures along the stable, preserved
foliation. 

 Given $\mu \in
SM(X)$ denote by $\mu ^{+}$ and $\mu ^{-}$ the positive and the negative
parts of it ($\mu =\mu ^{+}-\mu ^{-}$).

Let  $\pi _{x}:X\longrightarrow \mathbb{S}^{1}$ be the projection defined by $\pi
(x,y)=x$ and let $\pi _{x}^{\ast }$ be the associated pushforward map.

Denote by $\mathcal{AB}$ the set of  measures $\mu \in SM(X)$ such
that its associated marginal measures, $\mu _{x}^{+ }:=\pi
_{x}^{\ast }\mu ^{+ }$, $\mu _{x}^{-}:=\pi
_{x}^{\ast }\mu ^{- }$ are absolutely continuous with respect to the
Lebesgue measure $m$ on $\mathbb{S}^{1}$ i.e.%
\begin{equation}
\mathcal{AB}=\{\mu \in SM(X):\pi _{x}^{\ast }\mu ^{+}<<m\ \ \mathnormal{and}%
\ \ \pi _{x}^{\ast }\mu ^{-}<<m\}.  \label{thespace1}
\end{equation}%
Let us consider a finite positive measure $\mu \in \mathcal{AB}$ on the
space $X$ foliated by the contracting leaves $\mathcal{F}^{s}=\{\gamma
_{l}\}_{l\in \mathbb{S}^{1}}$ such that $\gamma _{l}={\pi _{x}}^{-1}(l)$. The Rokhlin
Disintegration Theorem describes a disintegration $\left( \{\mu _{\gamma_l
}\}_{\gamma_l \in {\cal{F}}^s },\mu _{x}=:\phi _{\mu}m\right) $ by a family $\{\mu _{\gamma
}\}$ of probability measures on the stable leaves and a non
negative marginal density $\phi _{\mu}:\mathbb{S}^{1}\longrightarrow \mathbb{R}$ with $%
||\phi _{\mu}||_{1}=\mu (X)$.
 By this  disintegration, for each
measurable set $E\subset X$, with the above notations it holds 
\begin{equation}
\mu (E)=\int_{S^{1}}{\mu _{\gamma_l }(E\cap \gamma_l )}d\mu _{x}(l ).
\end{equation}%
\label{rmkv}

\begin{definition}
Let  $\pi _{y}:X\longrightarrow D^{2}$ be the projection
defined by $\pi _{y}(x,y)=y$ and $\gamma \in \mathcal{F}^{s}$. Given a
positive measure $\mu \in \mathcal{AB}$ and its disintegration along the
stable leaves $\mathcal{F}^{s}$, $\left( \{\mu _{\gamma_l }\}_{\gamma_l },\mu
_{x}=\phi _{\mu}m_{}\right) $  we define the \textbf{restriction of $\mu $ on $\gamma_l $} as the
positive measure $\mu |_{\gamma_l }$ on $D^{2}$ (not on the leaf $\gamma_l $)
defined, for all mensurable set $A\subset D^{2}$, by 
\begin{equation*}
\mu |_{\gamma_l }:=\pi _{y}^{\ast }(\phi _{\mu}(l )\mu _{\gamma_l
}).
\end{equation*}%
For a given signed measure $\mu \in \mathcal{AB}$ and its decomposition $\mu
=\mu ^{+}-\mu ^{-}$, define the \textbf{restriction of $\mu $ on $\gamma_l $}
by%
\begin{equation}
\mu |_{\gamma_l }:=\mu ^{+}|_{\gamma_l }-\mu ^{-}|_{\gamma_l }.
\end{equation}%
Similarly we define the marginal density of $\mu $ as
$$\phi_\mu:=\phi_{\mu^+}-\phi_{\mu^-}.$$
\label{restrictionmeasure}
\end{definition}

Now we define a $L^1$ like space of disintegrated measures.

\begin{definition}
\label{l1likespace}Let $\mathcal{L}^{1}\subseteq \mathcal{AB}$ be defined as%
\begin{equation}
\mathcal{L}^{1}:=\left\{ \mu \in \mathcal{AB}:\int_{{\mathbb{S}}^{1}}{W(\mu
^{+}|_{\gamma_l },\mu ^{-}|_{\gamma_l })}dm_{}(l )<\infty \right\}
\label{L1measurewithsign}
\end{equation}%
and define a norm on this space,  $||\cdot ||_{''1''}:\mathcal{L}^{1}\longrightarrow 
\mathbb{R}$, by%
\begin{equation}
||\mu ||_{''1''}=\int_{{\mathbb{S}}^{1}}{W(\mu ^{+}|_{\gamma_l },\mu ^{-}|_{\gamma_l
})}dm_{1}(l ).  \label{l1normsm}
\end{equation}%
\end{definition}

Let us now consider the transfer operator $L_{F}$ associated with $F$. There
is a nice characterization of the transfer operator in our case, showing that this operator works similarly to a one dimensional transfer operator. For the proof see 
\cite{GalLuc} .

\begin{proposition}
For each leaf $\gamma \in \mathcal{F}^{s}$, let us define the map $F_{\gamma
}:D_{2}\longrightarrow D_{2}$ by 
\begin{equation*}
F_{\gamma }=\pi _{y}\circ F|_{\gamma }\circ [\pi _{y}|_\gamma]^{-1}.
\end{equation*}%
For all $\mu \in \mathcal{L}^{1}$ and for almost all $l\in \mathbb{S}^1$  the following
holds 
\begin{equation}
(L_{F}\mu )|_{\gamma_l }=\sum_{i=1}^{q}{\dfrac{F_{\gamma_{T_{i}^{-1}(l )}}^{\ast
}(\mu |_{\gamma_{T_{i}^{-1}(l )}})}{|T_{i}^{^{\prime }}\circ T_{i}^{-1}(l )|}%
}.  \label{niceformulaa}
\end{equation}%
\label{niceformulaab}
\end{proposition}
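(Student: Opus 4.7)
The plan is to verify the identity by duality against product observables $\tilde{\psi}(x)\,g(y)$ with $\tilde{\psi}\in C(\mathbb{S}^{1})$ and $g\in C(D^{2})$, since their linear span is dense in $C(X)$ and uniquely determines Borel measures on $X$. By linearity and the Hahn decomposition one may assume $\mu\geq 0$. A preliminary step is to check that $L_{F}\mu\in\mathcal{AB}$, so that the leaf restriction $(L_{F}\mu)|_{\gamma_{l}}$ makes sense for $m$-a.e.\ $l$: this follows from $\pi_{x}^{\ast}(L_{F}\mu)=L_{T}(\pi_{x}^{\ast}\mu)=L_{T}(\phi_{\mu}\,m)$ together with the fact that the one-dimensional transfer operator $L_{T}$ of the $C^{3}$ expanding map $T$ preserves absolute continuity with respect to $m$, with the usual explicit density $\sum_{i}\phi_{\mu}(T_{i}^{-1}(u))/|T'(T_{i}^{-1}(u))|$.

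By the definition of the pushforward and the skew-product form $F(x,y)=(T(x),G(x,y))$, the pairing of the left-hand side becomes
$$\int_{X}\tilde{\psi}(\pi_{x})\,g(\pi_{y})\,d(L_{F}\mu)=\int_{X}\tilde{\psi}(T(x))\,g(G(x,y))\,d\mu(x,y).$$
Disintegrating $\mu$ along $\mathcal{F}^{s}$ rewrites this as $\int_{\mathbb{S}^{1}}\tilde{\psi}(T(l))\int_{D^{2}}g(G(l,y'))\,d(\mu|_{\gamma_{l}})(y')\,dm(l)$, and the very definition of $F_{\gamma_{l}}=\pi_{y}\circ F|_{\gamma_{l}}\circ[\pi_{y}|_{\gamma_{l}}]^{-1}$ together with $\mu|_{\gamma_{l}}=\pi_{y}^{\ast}(\phi_{\mu}(l)\mu_{\gamma_{l}})$ identifies the inner integral with $\int g\,d(F_{\gamma_{l}}^{\ast}(\mu|_{\gamma_{l}}))$. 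Substituting $u=T(l)$ and summing over the $q$ inverse branches $T_{i}^{-1}$ of the expanding map $T$ produces the weights $|T'(T_{i}^{-1}(u))|^{-1}$ and yields exactly the pairing of the stated right-hand side against $\tilde{\psi}\otimes g$. Arbitrariness of $\tilde{\psi}$ and $g$ then converts this into the claimed leafwise equality of measures on $D^{2}$ for $m$-a.e.\ $u$.

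The point that requires most care is the passage from the duality identity to an equality of leaf-restricted measures: this relies on the $m$-almost-everywhere uniqueness of the Rokhlin disintegration of $L_{F}\mu$, which is precisely the source of the ``for almost all $l$'' qualifier in the statement. The extension to signed $\mu\in\mathcal{L}^{1}$ is then handled by linearity applied to $\mu^{+}$ and $\mu^{-}$ separately, after noting that $\mu|_{\gamma_{l}}=\mu^{+}|_{\gamma_{l}}-\mu^{-}|_{\gamma_{l}}$ commutes with the leafwise pushforward $F_{\gamma_{l}}^{\ast}$. I expect the change of variables over the inverse branches of $T$ and the bookkeeping between the two disintegrations to be routine; the conceptual content of the argument is simply the observation that the skew-product structure of $F$ makes $L_{F}$ act on leaf restrictions in exactly the same way as the classical one-dimensional transfer operator of $T$, with each leaf contribution entering through the leafwise pushforward $F_{\gamma_{l}}^{\ast}$.
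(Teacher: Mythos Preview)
The paper does not give its own proof of this proposition; it simply refers the reader to \cite{GalLuc}. Your argument is correct and self-contained: testing against product observables $\tilde\psi(x)g(y)$, using the skew-product form $F(x,y)=(T(x),G(x,y))$ so that $F_{\gamma_l}(y)=G(l,y)$, changing variables over the $q$ inverse branches of $T$, and then invoking the a.e.\ uniqueness of the Rokhlin disintegration to upgrade the integral identity to a leafwise equality of measures is exactly the natural route, and is in all likelihood the argument in the cited reference. Your remark that one should first check $L_F\mu\in\mathcal{AB}$ via $\pi_x^\ast L_F\mu=L_T(\phi_\mu m)$, and your handling of the signed case via $\mu^+-\mu^-$, are both appropriate.
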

Here, again $F_\gamma^*$ stands for the pushforward of $F_\gamma$. 

In \cite{Gdisp}, Section 12, for a solenoidal map $F$ as defined in this section the following elementary facts are proved.

\begin{proposition}[The weak norm is weakly contracted by $L_{F}$]
If $\mu \in \mathcal{L}^{1}$ then 
\begin{equation}
||L_{F}\mu ||_{''1''}\leq ||\mu ||_{''1''}.
\end{equation}%
\label{weakcontral11234}
\end{proposition}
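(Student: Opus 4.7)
The plan is to reduce the leafwise Wasserstein estimate to two elementary facts: (i) the pushforward by a contraction does not increase the weak norm $\|\cdot\|_W$, and (ii) the change of variables on the base $\mathbb{S}^1$ coming from the expanding factor $T$ exactly cancels the Jacobian weights appearing in the transfer operator formula \eqref{niceformulaa}.

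First I would rewrite the $"1"$-norm directly in terms of the weak norm of the signed restrictions: since $\mu|_{\gamma_l} = \mu^+|_{\gamma_l}-\mu^-|_{\gamma_l}$, we have $W(\mu^+|_{\gamma_l},\mu^-|_{\gamma_l})=\|\mu|_{\gamma_l}\|_W$, so
\begin{equation*}
\|\mu\|_{"1"}=\int_{\mathbb{S}^{1}}\|\mu|_{\gamma_l}\|_W\,dm(l),\qquad \|L_F\mu\|_{"1"}=\int_{\mathbb{S}^{1}}\|(L_F\mu)|_{\gamma_l}\|_W\,dm(l).
\end{equation*}
Applying formula \eqref{niceformulaa} from Proposition \ref{niceformulaab} and the triangle inequality for $\|\cdot\|_W$,
\begin{equation*}
\|(L_F\mu)|_{\gamma_l}\|_W\leq \sum_{i=1}^{q}\frac{\|F^{*}_{\gamma_{\tilde T_i^{-1}(l)}}(\mu|_{\gamma_{\tilde T_i^{-1}(l)}})\|_W}{|\tilde T'_i\circ \tilde T_i^{-1}(l)|}.
\end{equation*}

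The second step is the contraction estimate for $F_\gamma$. By assumption (a), for every leaf $\gamma$ the map $F_\gamma:D^2\to D^2$ has Lipschitz constant at most $\alpha<1$. Consequently, for any Lipschitz test $g$ with $\|g\|_{Lip}\leq 1$ one has $\|g\circ F_\gamma\|_{Lip}\leq \max(\alpha, \sup|g|)\leq 1$, and hence
\begin{equation*}
\left|\int g\,d(F_{\gamma}^{*}\nu)\right|=\left|\int g\circ F_\gamma\,d\nu\right|\leq \|\nu\|_W.
\end{equation*}
Taking the supremum over admissible $g$ gives $\|F_{\gamma}^{*}\nu\|_W\leq \|\nu\|_W$ for every signed measure $\nu$ on $D^2$. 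Applying this with $\nu=\mu|_{\gamma_{\tilde T_i^{-1}(l)}}$ yields
\begin{equation*}
\|(L_F\mu)|_{\gamma_l}\|_W\leq \sum_{i=1}^{q}\frac{\|\mu|_{\gamma_{\tilde T_i^{-1}(l)}}\|_W}{|\tilde T'_i\circ \tilde T_i^{-1}(l)|}.
\end{equation*}

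The final step is to integrate over $l\in\mathbb{S}^1$ and perform, for each branch $i$, the change of variables $u=\tilde T_i^{-1}(l)$, so that $dm(l)=|\tilde T_i'(u)|\,dm(u)$ and the Jacobian weight exactly cancels. Since the branches $\tilde T_i$ partition the base $\mathbb{S}^1$,
\begin{equation*}
\|L_F\mu\|_{"1"}\leq \sum_{i=1}^{q}\int_{\mathbb{S}^{1}}\frac{\|\mu|_{\gamma_{\tilde T_i^{-1}(l)}}\|_W}{|\tilde T'_i\circ \tilde T_i^{-1}(l)|}\,dm(l)=\sum_{i=1}^{q}\int_{\mathrm{dom}(\tilde T_i)}\|\mu|_{\gamma_{u}}\|_W\,dm(u)=\|\mu\|_{"1"}.
\end{equation*}

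The only mild subtlety I expect is the measurability/integrability of $l\mapsto \|\mu|_{\gamma_l}\|_W$ used implicitly via Fubini in the change of variables, but this is inherited from the Rokhlin disintegration and the hypothesis $\mu\in\mathcal{L}^{1}$. The genuinely dynamical ingredient is just the contraction estimate for $F_\gamma^*$; everything else is bookkeeping involving the disintegration and the standard push-forward identity for expanding base maps.
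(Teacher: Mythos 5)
Your argument is correct. The paper does not actually prove this proposition in the text --- it imports it as an ``elementary fact'' from \cite{Gdisp}, Section 12 --- but your proof is exactly the standard argument such a reference contains: rewrite $||\mu||_{"1"}$ as $\int ||\mu|_{\gamma_l}||_W\,dm(l)$, apply the leafwise formula \eqref{niceformulaa} with the triangle inequality, use that $F_\gamma^*$ does not increase $||\cdot||_W$ because $F_\gamma$ is an $\alpha$-contraction with $\alpha<1$ (so $||g\circ F_\gamma||_{Lip}\leq 1$ whenever $||g||_{Lip}\leq 1$, both the Lipschitz seminorm and the sup being controlled), and cancel the Jacobian weights by the change of variables $u=\tilde T_i^{-1}(l)$ on each branch. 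The integrability point you flag at the end is indeed harmless, since finiteness of $\int ||\mu|_{\gamma_l}||_W\,dm(l)$ is precisely the defining condition of $\mathcal{L}^1$.
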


\begin{proposition}
\label{5.6} For all  $\mu \in \mathcal{L}^{1}$ it holds 
\begin{equation}
||L_{F}\mu ||_{''1''}\leq \alpha ||\mu ||_{''1''}+(\alpha +1)||\phi
_{\mu}||_{1}.  \label{abovv}
\end{equation}
\end{proposition}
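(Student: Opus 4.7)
The plan is to combine the nice formula of Proposition \ref{niceformulaab} for $L_F$ with the $\alpha$-contraction of each $F_\gamma$ on the stable leaves. From that formula together with the triangle inequality, for almost every $l$,
\[
||(L_F \mu)|_{\gamma_l}||_W \leq \sum_{i=1}^q \frac{||F_{\gamma_{T_i^{-1}(l)}}^*(\mu|_{\gamma_{T_i^{-1}(l)}})||_W}{|T_i'(T_i^{-1}(l))|},
\]
so the whole problem is reduced to a leafwise estimate of $||F_\gamma^*\nu||_W$ where $\nu$ is a signed measure on $D^2$.

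The heart of the argument is the following leafwise splitting. Given $\nu$ of total mass $\phi := \nu(D^2)$, fix some $y_0 \in D^2$ and write $\nu = \phi\,\delta_{y_0} + R$, where $R := \nu - \phi\,\delta_{y_0}$ has zero total mass. Since $||\delta_y||_W = 1$ for any $y$, one has $||F_\gamma^*\nu||_W \leq |\phi| + ||F_\gamma^* R||_W$ and $||R||_W \leq ||\nu||_W + |\phi|$. The point of isolating $R$ is that, because it has zero mass, for any test function $g$ with $||g||_{Lip}\leq 1$ one has $\int g\, d(F_\gamma^* R) = \int (g\circ F_\gamma - c)\, dR$ for every constant $c$; choosing $c = g(F_\gamma(y_0))$ and using both the $\alpha$-contraction of $F_\gamma$ and the fact that the diameter of $D^2$ is $1$, one gets $||g\circ F_\gamma - c||_{Lip}\leq \alpha$. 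Hence $||F_\gamma^*R||_W \leq \alpha||R||_W$, and putting everything together,
\[
||F_\gamma^*\nu||_W \leq \alpha||\nu||_W + (1+\alpha)|\phi|.
\]

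Inserting this into the branch-sum with $\nu = \mu|_{\gamma_{T_i^{-1}(l)}}$ (whose mass is $\phi_\mu(T_i^{-1}(l))$), integrating over $l \in \mathbb{S}^1$, and using the standard change of variable $l = T_i(x)$ which turns the sum over inverse branches with the Jacobian factor $|T_i'|^{-1}$ into a single integration over the base $\mathbb{S}^1$, one obtains
\[
||L_F\mu||_{"1"} \leq \alpha \int_{\mathbb{S}^1}||\mu|_{\gamma_x}||_W\,dm(x) + (1+\alpha)\int_{\mathbb{S}^1}|\phi_\mu(x)|\,dm(x) = \alpha||\mu||_{"1"} + (1+\alpha)||\phi_\mu||_1,
\]
which is the claimed inequality. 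The subtle point is the mass/zero-mass splitting: only the zero-mass part of each leafwise restriction is genuinely $\alpha$-contracted by $F_\gamma^*$, while the Dirac-mass part can only be bounded by $|\phi|$, and it is precisely this that produces the extra $(1+\alpha)||\phi_\mu||_1$ term rather than a pure $\alpha$-contraction.
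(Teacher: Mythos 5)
Your argument is correct: the leafwise splitting $\nu=\phi\,\delta_{y_0}+R$ with $R$ of zero mass, the bound $||F_\gamma^{*}R||_W\leq\alpha||R||_W$ obtained by subtracting the constant $g(F_\gamma(y_0))$ from the test function, and the change of variables over the inverse branches all check out and yield exactly $\alpha||\mu||_{"1"}+(\alpha+1)||\phi_\mu||_1$. Note that the paper does not prove Proposition \ref{5.6} itself but imports it from \cite{Gdisp}, Section 12, so there is no in-text proof to compare against; your proof is the standard Lasota--Yorke-type argument for these disintegrated Wasserstein norms and can be regarded as a correct self-contained substitute for the citation.
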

We denote by $\mathcal{V} \subseteq \mathcal{L}^1$ the set of measures having $0$ average, i.e. $$\mathcal{V}:=\{\mu \in \mathcal{L}^1|\mu(X)=0\} .$$
\begin{proposition}[Exponential convergence to equilibrium]
\label{5.8} There exist $D\in \mathbb{R}$ and $0<\beta _{1}<1$ such that,
for every signed measure $\mu \in \mathcal{V}$, it holds 
\begin{equation*}
||L_{F}^{n}\mu ||_{''1''}\leq D_{2}\beta _{1}^{n}(||\mu ||_{''1''}+||\phi
_{\mu}||_{_{W^{1,1}}})
\end{equation*}%
for all $n\geq 1$. \ 
\end{proposition}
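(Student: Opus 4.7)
The plan is to combine the Lasota--Yorke type inequality of Proposition \ref{5.6} with the known spectral gap for the transfer operator of the expanding base map $T$ on a suitable Sobolev space. The skew product structure of $F$ forces the projection onto the first coordinate to intertwine the dynamics: if $\mu\in \mathcal{L}^{1}$ has marginal density $\phi_{\mu}$, then $\phi_{L_F\mu}=L_T \phi_{\mu}$ where $L_T$ is the one-dimensional transfer operator associated to $T$. First I would verify this identity carefully (it follows from the definition of disintegration and the fact that $\pi_x \circ F = T \circ \pi_x$), so that iterating we get $\phi_{L_F^{n}\mu}=L_T^{n}\phi_{\mu}$.

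Next, since $T$ is a $C^{3}$ expanding map of the circle of degree $q$, classical results (Ruelle, Keller, Liverani) give that $L_T$ has a spectral gap on $W^{1,1}(\mathbb{S}^1)$: there exist $C_1>0$ and $\rho\in(0,1)$ such that for every zero-mean $\phi\in W^{1,1}$,
\begin{equation*}
\|L_T^{n}\phi\|_{W^{1,1}}\leq C_1\rho^{n}\|\phi\|_{W^{1,1}}.
\end{equation*}
For $\mu\in\mathcal V$ the marginal density $\phi_{\mu}$ has zero mean (because $\int\phi_{\mu}\,dm = \mu(X)=0$), so this decay applies. In particular $\|\phi_{L_F^{n}\mu}\|_{1}\leq \|L_T^{n}\phi_{\mu}\|_{W^{1,1}}\leq C_1\rho^{n}\|\phi_{\mu}\|_{W^{1,1}}$.

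Now I would iterate the Lasota--Yorke inequality from Proposition \ref{5.6} applied to $L_F^{n-1}\mu$:
\begin{equation*}
\|L_F^{n}\mu\|_{"1"} \leq \alpha\|L_F^{n-1}\mu\|_{"1"} + (\alpha+1)\|\phi_{L_F^{n-1}\mu}\|_1 \leq \alpha\|L_F^{n-1}\mu\|_{"1"} + (\alpha+1)C_1\rho^{n-1}\|\phi_{\mu}\|_{W^{1,1}}.
\end{equation*}
Solving this linear recursion gives
\begin{equation*}
\|L_F^{n}\mu\|_{"1"}\leq \alpha^{n}\|\mu\|_{"1"} + (\alpha+1)C_1\|\phi_{\mu}\|_{W^{1,1}}\sum_{k=0}^{n-1}\alpha^{n-1-k}\rho^{k}.
\end{equation*}
The geometric-type sum is bounded by $n\,\max(\alpha,\rho)^{n-1}$, or more sharply by $\max(\alpha,\rho)^{n}/|\alpha-\rho|$ when $\alpha\neq\rho$. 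Choosing any $\beta_1\in(\max(\alpha,\rho),1)$ absorbs the polynomial factor into the exponential and yields the desired bound
\begin{equation*}
\|L_F^{n}\mu\|_{"1"}\leq D_2\beta_1^{n}\bigl(\|\mu\|_{"1"}+\|\phi_{\mu}\|_{W^{1,1}}\bigr)
\end{equation*}
for a suitable $D_2\geq 0$.

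The main obstacle is the spectral gap input for $L_T$ on $W^{1,1}$: this is a classical statement for smooth expanding circle maps, but it must be invoked (or sketched) cleanly, and one must verify that the Rokhlin disintegration is compatible with the skew-product structure so that $\phi_{L_F^{n}\mu}$ truly equals $L_T^{n}\phi_{\mu}$ for signed measures in $\mathcal L^{1}$. Once these ingredients are in place, the rest of the argument is a short recursion involving the fiberwise contraction constant $\alpha$ and the base mixing rate $\rho$.
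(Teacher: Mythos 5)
Your argument is correct, but note that the paper does not actually prove this proposition: it is imported verbatim from \cite{Gdisp}, Section 12, together with Propositions \ref{weakcontral11234} and \ref{5.6}. So the comparison is with the ingredients the paper makes available rather than with a written proof. Your route --- the intertwining $\phi_{L_F^{n}\mu}=L_T^{n}\phi_{\mu}$ coming from $\pi_x\circ F=T\circ\pi_x$, the spectral gap of $L_T$ on zero-mean $W^{1,1}$ densities, and the linear recursion obtained by feeding $\|\phi_{L_F^{n-1}\mu}\|_{1}\leq C_1\rho^{n-1}\|\phi_{\mu}\|_{W^{1,1}}$ into Proposition \ref{5.6} --- is the standard skew-product argument and is fully consistent with how the paper itself uses these facts elsewhere (the proof of Lemma \ref{17} already invokes $\phi_{L_F\mu}=L_T\phi_{\mu}$ and the $W^{1,1}$ Lasota--Yorke inequality for the base map). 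Two points deserve the care you already flag: (i) the identity $\phi_{L_F\mu}=L_T\phi_{\mu}$ for \emph{signed} $\mu$ follows by linearity of $\pi_x^{\ast}$ and $L_F$ even though the Jordan decomposition is not preserved by $L_F$; (ii) the spectral gap of $L_T$ on $W^{1,1}$ is not a one-line fact but follows from the Lasota--Yorke inequality, the compact embedding $W^{1,1}\hookrightarrow L^{1}$, and mixing of the degree-$q$ expanding map via Hennion/Ionescu-Tulcea--Marinescu; since the paper assumes $T$ is $C^{3}$ expanding, this classical input is legitimate. The recursion, the bound $\sum_{k=0}^{n-1}\alpha^{n-1-k}\rho^{k}\leq n\max(\alpha,\rho)^{n-1}$, and the absorption of the polynomial factor by choosing $\beta_1\in(\max(\alpha,\rho),1)$ are all sound, so your proposal is a valid self-contained derivation of the statement.
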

In the previous proposition $||\ ||_{W^{1,1}}$ stands for the $1,1$ Sobolev norm.
Furthermore the system has an unique invariant measure in $\mathcal{L}^{1}$.
\begin{proposition}
There is a unique $\mu \in \mathcal{L}^{1}$ such that $L_{F}\mu =\mu $.
\end{proposition}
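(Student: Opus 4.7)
The plan is to use Proposition \ref{5.8} (exponential convergence on zero-average measures) as the single engine for both existence and uniqueness, reducing everything to a one-dimensional spectral claim about the transfer operator $L_T$ of the expanding base map $T$. Since $\mathcal{V}$ is $L_F$-invariant and contracted exponentially in $||\cdot||_{"1"}$ up to a $W^{1,1}$ cost on the marginal, the iterates of a suitably regular starting measure will be Cauchy, and the difference of any two invariant probabilities will have to vanish.

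For existence I would start from the normalized Lebesgue measure $m_X$ on $X=\mathbb{S}^{1}\times D^{2}$, which lies in $\mathcal{L}^{1}$ with marginal density identically $1$, and set $\mu_n := L_F^{n} m_X$. The telescoping differences $\mu_{n+1}-\mu_n = L_F^{n}(L_F m_X - m_X)$ all lie in $\mathcal{V}$, and their $x$-marginal $L_T\mathbf{1}-\mathbf{1}$ is $C^{2}$ (from $L_T\mathbf{1}(x)=\sum_{i=1}^{q}|T'(T_i^{-1}(x))|^{-1}$ together with $T\in C^{3}$), so its $W^{1,1}$ norm is finite. Proposition \ref{5.8} then gives $||\mu_{n+1}-\mu_n||_{"1"}\leq D\beta_1^{n}$ for some constant $D$, so $\{\mu_n\}$ is Cauchy in $||\cdot||_{"1"}$ and converges to a probability $\mu\in\mathcal{L}^{1}$. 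Continuity of $L_F$ in the weak norm, which follows from Proposition \ref{weakcontral11234}, lets me pass to the limit and obtain $L_F\mu=\mu$.

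For uniqueness I would take two invariant probabilities $\mu_1,\mu_2\in\mathcal{L}^{1}$ and set $\nu:=\mu_1-\mu_2\in\mathcal{V}$. The skew-product structure forces the $x$-marginal to satisfy $L_T\phi_\nu=\phi_\nu$ with zero mean, and since $T$ is $C^{3}$ expanding of degree $q$ the classical Lasota--Yorke / ergodicity theory gives a one-dimensional $L_T$-fixed subspace in $L^1$, so $\phi_\nu=0$ and in particular $||\phi_\nu||_{W^{1,1}}=0$. Proposition \ref{5.8} then yields $||\nu||_{"1"}=||L_F^{n}\nu||_{"1"}\leq D_2\beta_1^{n}||\nu||_{"1"}\to 0$, hence $\nu=0$ and $\mu_1=\mu_2$.

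The only step requiring real care is the verification of the $W^{1,1}$ hypothesis of Proposition \ref{5.8} in both halves of the argument; each time this is a statement about the one-dimensional base map (smoothness of $L_T\mathbf{1}$ in the existence step, simplicity of the eigenvalue $1$ of $L_T$ in the uniqueness step), and both are classical facts for $C^{3}$ expanding maps of the circle. A secondary bookkeeping task is to note that $(\mathcal{L}^{1},||\cdot||_{"1"})$ is complete enough to accommodate the Cauchy limit and that $L_F$ is continuous in this norm, both of which follow from Proposition \ref{weakcontral11234}.
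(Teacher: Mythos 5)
The paper does not actually prove this proposition: it is quoted, together with Propositions \ref{weakcontral11234}, \ref{5.6} and \ref{5.8}, from Section 12 of \cite{Gdisp}, so there is no in-text argument to compare yours against. Judged on its own terms, your reduction of both halves to Proposition \ref{5.8} plus one-dimensional facts about $L_T$ is sound. For existence, the intertwining $\pi_x^{\ast}\circ L_F=L_T\circ\pi_x^{\ast}$ coming from the skew-product structure makes the marginal density of $L_Fm_X-m_X$ equal to $L_T\mathbf{1}-\mathbf{1}$, which is $C^{2}$ for a $C^{3}$ expanding $T$, so the telescoping bound $||\mu_{n+1}-\mu_n||_{"1"}\leq D_2\beta_1^{n}\left(||L_Fm_X-m_X||_{"1"}+||L_T\mathbf{1}-\mathbf{1}||_{W^{1,1}}\right)$ is legitimate. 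For uniqueness, $L_T\phi_\nu=\phi_\nu$ with $\int\phi_\nu\,dm=0$ forces $\phi_\nu=0$ by simplicity of the eigenvalue $1$ of $L_T$ on $L^{1}$, after which $||\nu||_{"1"}\leq D_2\beta_1^{n}||\nu||_{"1"}\rightarrow 0$ gives $\nu|_{\gamma_l}=0$ for a.e.\ $l$ and hence $\nu=0$.

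The one genuine gap is the completeness step. Proposition \ref{weakcontral11234} only says that $L_F$ is a weak contraction for $||\cdot||_{"1"}$; it gives Lipschitz continuity of $L_F$ in that norm but says nothing about $(\mathcal{L}^{1},||\cdot||_{"1"})$ being complete, and disintegration-based spaces of this kind are in general \emph{not} complete (a $||\cdot||_{"1"}$-Cauchy sequence in $\mathcal{AB}$ can converge to a measure whose marginal is no longer absolutely continuous). You must close this by hand. Since $||\cdot||_{W}\leq||\cdot||_{"1"}$ (test a global Lipschitz observable leaf by leaf), your Cauchy sequence of probability measures converges in the $W$ distance, hence weak-$\ast$ on the compact space $X$, to some $\mu\in PM(X)$, and $L_F\mu=\mu$ then follows from the weak-$\ast$ continuity of the pushforward of the continuous map $F$ (not from Proposition \ref{weakcontral11234}). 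To see that $\mu\in\mathcal{L}^{1}$, note that its marginal is the $L^{1}$-limit of $L_T^{n}\mathbf{1}$, hence absolutely continuous, and that any positive finite measure in $\mathcal{AB}$ automatically satisfies the integrability condition of Definition \ref{l1likespace} because $W(\mu^{+}|_{\gamma_l},0)\leq\phi_\mu(l)$. With that patch your argument is complete.
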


Let us now consider the following stronger norm%
\begin{equation*}
||\mu ||_{s}=||\mu ||_{''1''}+||\phi _{\mu}||_{_{W^{1,1}}}.
\end{equation*}
We can then define
\begin{equation}\label{BS}
    B_{s}:=\{\mu \in \mathcal{L}^1, \ s.t. \ ||\mu ||_{s}<+\infty \}.
\end{equation}

Considering $|| \  ||_{s}$ as a strong norm and   $|| \  ||_{''1''}$ as a weak norm we can easily prove a Lasota Yorke
inequality, showing that the system has a kind of regularization  for these two norms.
\begin{lemma} \label{17} For each $\mu\in B_s$
    \begin{equation}\label{30}
       ||L_{F}\mu ||_{s} \leq \max (\alpha ,\lambda )||\mu ||_{s}+[(\alpha +1)+b]||\phi _{\mu}||_{1}.
    \end{equation}
\end{lemma}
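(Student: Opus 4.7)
The plan is to split the strong norm $\|L_F\mu\|_s = \|L_F\mu\|_{"1"} + \|\phi_{L_F\mu}\|_{W^{1,1}}$ into its two summands and estimate them separately, then recombine with a $\max$.

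For the weak part $\|L_F\mu\|_{"1"}$, I would just invoke Proposition \ref{5.6} directly, which already gives
\begin{equation*}
\|L_F\mu\|_{"1"}\leq \alpha\|\mu\|_{"1"}+(\alpha+1)\|\phi_\mu\|_1.
\end{equation*}
For the Sobolev part, the key observation is that since $F$ is a skew product over $T$, the pushforward commutes with the projection $\pi_x$: $\pi_x^\ast L_F\mu = L_T\pi_x^\ast\mu$, where $L_T$ is the one-dimensional transfer operator associated to the expanding circle map $T$. Passing to densities this says $\phi_{L_F\mu}=L_T\phi_\mu$. Hence I have reduced the estimate of $\|\phi_{L_F\mu}\|_{W^{1,1}}$ to a classical one-dimensional Lasota--Yorke inequality for the $C^3$ expanding map $T$ of degree $q$: there exist $0<\lambda<1$ (essentially the inverse of the expansion constant, which dominates $\alpha^{-1}$) and a constant $b\geq 0$ such that
\begin{equation*}
\|L_T\phi\|_{W^{1,1}}\leq \lambda\,\|\phi\|_{W^{1,1}}+b\,\|\phi\|_1
\end{equation*}
for all $\phi\in W^{1,1}(\mathbb{S}^1)$. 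This is a standard computation (differentiating the explicit formula $L_T\phi(x)=\sum_i \phi(\tilde T_i^{-1}(x))/|T'(\tilde T_i^{-1}(x))|$ and using that $T\in C^3$ so that $1/|T'|$ is $C^2$), which I would not grind through here.

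Summing the two estimates gives
\begin{equation*}
\|L_F\mu\|_s\leq \alpha\|\mu\|_{"1"}+\lambda\|\phi_\mu\|_{W^{1,1}}+[(\alpha+1)+b]\|\phi_\mu\|_1,
\end{equation*}
and bounding the first two terms by $\max(\alpha,\lambda)\,(\|\mu\|_{"1"}+\|\phi_\mu\|_{W^{1,1}})=\max(\alpha,\lambda)\|\mu\|_s$ yields \eqref{30}.

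The only non-trivial step is the 1D Lasota--Yorke for $L_T$ on $W^{1,1}$; this is the main ``work'' but is entirely classical for $C^3$ expanding maps of the circle, and the constants $\lambda$ and $b$ one obtains from it are exactly the ones that appear in the statement. The rest is bookkeeping: the commutation $\phi_{L_F\mu}=L_T\phi_\mu$ decouples the base from the fibre, so Proposition \ref{5.6} handles the contracting fibre direction while the 1D Lasota--Yorke handles the expanding base direction.
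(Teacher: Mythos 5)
Your proposal is correct and follows essentially the same route as the paper: split $\|L_F\mu\|_s$ into the $\|\cdot\|_{"1"}$ part handled by Proposition \ref{5.6} and the marginal-density part, identify $\phi_{L_F\mu}=L_T\phi_\mu$ via the skew-product structure, and apply the classical one-dimensional Lasota--Yorke inequality for the expanding base map before taking the $\max$ of the two contraction coefficients. You make explicit the commutation $\pi_x^\ast L_F\mu = L_T\pi_x^\ast\mu$ that the paper uses silently, which is a small improvement in exposition.
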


\begin{proof}
By Proposition \ref{5.6} and the Lasota Yorke inequality for expanding maps
\begin{eqnarray*}
||L_{F}\mu ||_{s} &\leq &||L_{F}\mu ||_{''1''}+||L_{T}\phi _{\mu
}||_{_{W^{1,1}}} \\
&\leq &\alpha ||\mu ||_{''1''}+(\alpha +1)||\phi _{\mu}||_{1} \\
&&+\lambda ||\phi _{\mu}||_{_{W^{1,1}}}+B||\phi _{\mu}||_{1} \\
&\leq &\max (\alpha ,\lambda )||\mu ||_{s}+[(\alpha +1)+b]||\phi _{\mu}||_{1}.
\end{eqnarray*}
    
\end{proof}

\begin{remark}
From \eqref{30}, since $||\phi _{\mu}||_{1}\leq||\mu||_{''1''}$ one also can deduce
    \begin{equation}
       ||L_{F}\mu ||_{s} \leq \max (\alpha ,\lambda )||\mu ||_{s}+[(\alpha +1)+b]||\mu||_{''1''}.
    \end{equation}
\end{remark}

We are then going to apply Theorem \ref{main} considering  $(B_{s},||~||_{s}) 
$ as a strong space, as just defined, we will also use $(\mathcal{L}^{1},||~||_{''1''})$ as a  weak
space.  To apply Theorem \ref{main}
we have to verify that the iterates of the Lebesgue measure have bounded
Lipschitz multipliers.

In order to achieve this we need we need to recall some further results on
the regularity of the iterates of measures by solenoidal maps.

Given $\mu \in \mathcal{L}^{1}$ and its marginal density ${{\phi _{\mu }}}$.  Let us consider the following stronger space of measures
\begin{equation*}
''{W^{1,1}}''=\left\{ 
\begin{array}{c}
\mu \in \mathcal{L}^{1} \ s.t. \ {{\phi _{\mu }\in W}}^{1,1}~and~\forall l_{1}~\lim_{l \rightarrow  l_{1}}||\mu |_{\gamma _{l}}-\mu
|_{\gamma _{l_1}}||_{W}=0~and~ \\ 
for~almost~all~l_1,~D(\mu ,l_1):=\limsup_{l \to
l_{1}}||\frac{\mu |_{\gamma _{l_1}}-\mu |_{\gamma _{l}}}{%
l_1-l}||_{W}~<\infty ~and 
\\
||\mu ||_{''1''}+\int |D(\mu ,\gamma_l )|dl ~<\infty
\end{array}%
\right\}.
\end{equation*}

\begin{definition}
\label{W111}Let us consider the norm
\begin{equation*}
||\mu ||_{{{''{W^{1,1}}''}}}:=||\mu ||_{''1''}+\int |D(\mu ,\gamma )|d\gamma .
\end{equation*}
\end{definition}
The following is proved in  \cite{Gdisp}, Section 12. 

\begin{proposition}
\label{LYYY}Let $F$ be a solenoidal map satisfying $(a),(b),(c)$, then $L_{F}({{''{W^{1,1}}''}})\subseteq
{{''{W^{1,1}}''}}$ and there are $\lambda <1,B>0$ s.t $\forall \mu \in {{''{W^{1,1}}''}}$
with $\mu \geq 0$%
\begin{equation*}
||L_{F}\mu ||_{{{''{W^{1,1}}''}}}\leq \lambda ({{\alpha ||\mu ||_{{{''{W^{1,1}}''}}}+||\phi }%
^{\prime }{_{\mu }||}}_{1}{)+}B||\mu ||_{''1''}.
\end{equation*}
\end{proposition}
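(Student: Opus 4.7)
The plan is to split the target norm as $\|L_F\mu\|_{"W^{1,1}"}=\|L_F\mu\|_{"1"}+\int |D(L_F\mu,\gamma_l)|\,dl$ and control each summand separately. The first summand is dealt with by invoking Proposition \ref{5.6}, which gives $\|L_F\mu\|_{"1"}\le \alpha\|\mu\|_{"1"}+(\alpha+1)\|\phi_\mu\|_1$; the $\alpha\|\mu\|_{"1"}$ piece is absorbed into $\alpha\lambda\|\mu\|_{"W^{1,1}"}$ (up to adjusting $\lambda$), and the $\|\phi_\mu\|_1\le\|\mu\|_{"1"}$ piece feeds into the $B\|\mu\|_{"1"}$ term on the right-hand side. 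So the real work is to produce a matching bound on $\int |D(L_F\mu,\gamma_l)|\,dl$.

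For the seminorm part, I would start from the fiberwise formula (\ref{niceformulaa}),
\begin{equation*}
(L_F\mu)|_{\gamma_l}=\sum_{i=1}^q\frac{F^{\ast}_{\gamma_{T_i^{-1}(l)}}(\mu|_{\gamma_{T_i^{-1}(l)}})}{|T'(T_i^{-1}(l))|},
\end{equation*}
fix $l_1$ near $l$, and telescope the difference $(L_F\mu)|_{\gamma_l}-(L_F\mu)|_{\gamma_{l_1}}$ branch-by-branch into three pieces: (i) the variation of the fiber measure $\mu|_{\gamma_{T_i^{-1}(l)}}$ as $l$ moves, which by the chain rule produces $(T_i^{-1})'(l)=1/|T'(T_i^{-1}(l))|$ times the leaf-derivative; (ii) the variation of the pushforward map $F^{\ast}_{\gamma_s}$ with base point $s$, which by Kantorovich--Rubinstein duality is controlled by $\sup_{x,y}|\partial G_0/\partial x|$ (finite by assumption (b)) times $|s-s_1|$ times the total mass $\phi_\mu(s)$; and (iii) the variation of the Jacobian factor $1/|T'(T_i^{-1}(l))|$, controlled by $C^3$-regularity of $T$ through $|T''|/|T'|^2$.

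Next I apply the fiber contraction (assumption (a)), which gives $\|F^{\ast}_\gamma \nu\|_W\le\alpha\|\nu\|_W$ and is the source of the $\alpha$ in front of $D(\mu,\cdot)$ in piece (i). Dividing by $|l-l_1|$, taking $\limsup$, integrating over $l$, and performing the change of variables $s=T_i^{-1}(l)$ with $dl=|T'(s)|\,ds$ in each branch, the branch sum reassembles into a single integral over $\mathbb{S}^1$. Setting $\lambda=\sup 1/|T'|<1$, piece (i) yields $\alpha\lambda\int|D(\mu,\gamma_s)|\,ds$, together with a $\lambda\|\phi'_\mu\|_1$ contribution coming from the part of the leaf variation that is carried by the changing marginal density (see below); pieces (ii) and (iii) produce terms of the form $B_1\|\phi_\mu\|_1\le B_1\|\mu\|_{"1"}$. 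Combining with the $\|L_F\mu\|_{"1"}$ bound gives the claimed inequality
\begin{equation*}
\|L_F\mu\|_{"W^{1,1}"}\le\lambda\bigl(\alpha\|\mu\|_{"W^{1,1}"}+\|\phi'_\mu\|_1\bigr)+B\|\mu\|_{"1"}.
\end{equation*}

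The main obstacle, and the step that requires genuine care, is the fiber-by-fiber decomposition of the leaf variation $\|\mu|_{\gamma_s}-\mu|_{\gamma_{s_1}}\|_W$. Writing $\mu|_{\gamma_s}=\phi_\mu(s)\,\pi_y^{\ast}(\mu_{\gamma_s})$, one must split the variation into a ``conditional'' component (variation of the probability measure $\mu_{\gamma_s}$ on the fiber), which is what $D(\mu,\gamma_s)$ essentially measures and which gets multiplied by the fiber contraction $\alpha$, and a ``marginal'' component $|\phi_\mu(s)-\phi_\mu(s_1)|$ times the diameter of $D^2$, which after integrating the $\limsup$ over $s$ yields precisely the $\|\phi'_\mu\|_1$ contribution. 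Getting the right coefficients here, and in particular ensuring that the $\alpha$ sits only in front of $\|\mu\|_{"W^{1,1}"}$ while $\|\phi'_\mu\|_1$ enters with the bare expansion factor $\lambda$, is the delicate bookkeeping at the heart of the proof; everything else is the standard one-dimensional Lasota--Yorke mechanism for the base expanding map $T$ composed with these Wasserstein estimates.
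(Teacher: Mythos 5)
The paper does not actually prove Proposition \ref{LYYY}; it is imported verbatim from \cite{Gdisp}, Section 12, so there is no in-paper argument to compare against. Your sketch reconstructs the natural (and almost certainly the intended) proof: the fiberwise formula \eqref{niceformulaa}, a branch-by-branch telescoping of $(L_F\mu)|_{\gamma_l}-(L_F\mu)|_{\gamma_{l_1}}$ into the three sources of variation, and the change of variables $dl=|T'(s)|\,ds$ producing the factor $\lambda=\sup 1/|T'|$, with pieces (ii) and (iii) absorbed into $B\|\mu\|_{"1"}$. One caveat: the intermediate claim $\|F_\gamma^{\ast}\nu\|_W\le\alpha\|\nu\|_W$ is false as stated for the norm of Definition \ref{wasserstein} (which caps $\|g\|_\infty$ as well as $Lip(g)$) unless $\nu$ has zero total mass; for $\nu=\mu|_{\gamma_s}-\mu|_{\gamma_{s_1}}$ the mass is $\phi_\mu(s)-\phi_\mu(s_1)\neq 0$, and this is exactly what generates the $\|\phi'_\mu\|_1$ term. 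You identify the required conditional/marginal splitting in your last paragraph, so this is an imprecision of exposition rather than a gap, though carrying it out honestly yields a coefficient of order $(1+\alpha)\lambda$ rather than $\lambda$ on $\|\phi'_\mu\|_1$ --- harmless for the way the inequality is used in Corollary \ref{trima}, but worth not sweeping under the rug.
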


Iterating  the inequality, one gets

\begin{corollary}\label{trima}
\label{LYYYYYY}There are $B>0,\lambda <1$ such that%
\begin{equation}\label{tri1}
||L^{(n)}\mu ||_{{{''{W^{1,1}}''}}}\leq \lambda ^{n}({{||\mu ||_{{{''{W^{1,1}}''}}}+||\phi 
}^{\prime }{_{\mu }||}_{1})+}B||\mu ||_{1}.
\end{equation}
Where $L^{(n)}$ stands for the sequential composition of the operators $L_{F_i}$ as defined in Section \ref{sec2}.
\end{corollary}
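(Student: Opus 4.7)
The strategy is to iterate the one-step Lasota--Yorke inequality of Proposition \ref{LYYY}, with the complication that its right-hand side features the auxiliary quantity $||\phi'_\mu||_1$ (the $L^1$-norm of the derivative of the marginal density), which is not one of the norms appearing on the left-hand side. The plan is therefore to run, in parallel, a second recursion that controls exactly this quantity, and then to combine the two.

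For the auxiliary recursion I would exploit the fact that every $F_i$ in our sequential system has the same base map $T$: only the fibre component $G_i$ varies with $i$. Consequently the marginal transforms under a single one-dimensional transfer operator, $\phi_{L_{F_i}\mu}=L_T\phi_\mu$ for all $i$. This lets me invoke the classical one-dimensional Lasota--Yorke inequality for the smooth expanding map $T$: there exist constants $0<\alpha_T<1$ and $C_T>0$ with $||(L_T\phi)'||_1 \leq \alpha_T||\phi'||_1+C_T||\phi||_1$ for every $\phi\in W^{1,1}$. Since the weak-norm non-expansion, Proposition \ref{weakcontral11234}, gives $||L^{(n)}\mu||_{"1"}\leq ||\mu||_{"1"}$, and in particular $||\phi_{L^{(n)}\mu}||_1\leq ||\mu||_{"1"}$ for all $n$, a one-line induction yields
$$||\phi'_{L^{(n)}\mu}||_1 \;\leq\; \alpha_T^{n}\,||\phi'_\mu||_1 \;+\; \frac{C_T}{1-\alpha_T}\,||\mu||_{"1"}.$$

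With this estimate in hand I would set $a_n:=||L^{(n)}\mu||_{"W^{1,1}"}$ and plug the auxiliary bound into the inequality of Proposition \ref{LYYY} applied at each step of the composition, obtaining
$$a_n \;\leq\; \lambda\alpha\, a_{n-1} \;+\; \lambda\,||\phi'_{L^{(n-1)}\mu}||_1 \;+\; B\,||\mu||_{"1"}.$$
A second elementary induction (equivalently, a geometric-series computation using $\lambda\alpha<1$ and $\alpha_T<1$) then produces an inequality of the advertised form
$$a_n \;\leq\; \Lambda^{n}\bigl(||\mu||_{"W^{1,1}"}+||\phi'_\mu||_1\bigr) \;+\; B'\,||\mu||_{"1"}$$
for some $\Lambda<1$ and $B'>0$ built from $\lambda,\alpha,\alpha_T,C_T,B$. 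Relabelling $\Lambda$ as $\lambda$ and $B'$ as $B$ yields the statement.

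The only real difficulty is bookkeeping: one must verify that the constants in Proposition \ref{LYYY}, in the classical one-dimensional Lasota--Yorke inequality for $T$, and in Proposition \ref{weakcontral11234} are uniform in $i$. Uniformity is precisely what hypotheses (a), (b), (c) provide for the fibre maps $G_i$, while the base map $T$ does not depend on $i$ at all, so the one-dimensional Lasota--Yorke constants $\alpha_T,C_T$ are a single fixed pair. Once this uniformity is in place the two coupled recursions close, and the iteration produces the required exponential decay factor $\lambda^n$ in front of the strong data with an additive $O(||\mu||_{"1"})$ remainder, which is exactly the content of the corollary.
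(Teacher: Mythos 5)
Your proof is correct and follows essentially the same route as the paper: the paper bundles $||\mu||_{"W^{1,1}"}+||\phi'_\mu||_{1}$ into a single strong norm, asserts a common one-step Lasota--Yorke inequality for it, and iterates with the usual geometric-series bound on the second coefficient. Your version merely decouples this into two coupled recursions, making explicit the ingredient the paper leaves implicit, namely that $\phi_{L_{F_i}\mu}=L_T\phi_\mu$ for every $i$ (the base map is fixed) so the classical one-dimensional Lasota--Yorke inequality for $T$ controls $||\phi'_{L^{(n)}\mu}||_{1}$ uniformly.
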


\begin{proof}
By Propositions \ref{weakcontral11234} and \ref{LYYY} the operators $L_i:=L_{F_i }$ satisfy a common
Lasota Yorke inequality. Denoting $||\mu||_s:=||\mu||_{W^{1,1}}+||\phi'_\mu||_1$, there are constants $B,\lambda
_{1}\geq 0$ with $\lambda _{1}<1$ such that for all $f\in B_{s},$ $\mu \in
P_{w},$ $i\in \mathbb{N}$%
\begin{equation}
\begin{array}{c}
||L_{i}\mu||_{''1''}\leq ||\mu||_{''1''} \\ 
||L_{i}\mu||_{s}\leq \lambda _{1}||\mu||_{s}+B||\mu||_{''1''}.
\end{array}
.  \label{1}
\end{equation}
First we remark that obviously
\begin{equation}
||L^{(n)}\mu\Vert _{''1''}\leq ||\mu\Vert _{''1''}.
\end{equation}%
For the stronger norm $||\ ||_s$, given some $j\in \mathbb{N}$, composing the operators we have 
\begin{equation*}
||L_{j}f\Vert _{s}\leq \lambda _{1}\Vert f\Vert _{s}+B\Vert f\Vert _{''1''}
\end{equation*}%
thus%
\begin{eqnarray*}
||L_{j}\circ L_{j+1}(f)\Vert _{s} &\leq &\lambda _{1}\Vert L_{j}f\Vert
_{s}+B\Vert L_{j}f\Vert _{''1''} \\
&\leq &\lambda _{1}^{2}\Vert f\Vert _{s}+\lambda _{1}B||f||_{''1''}+B\Vert
f\Vert _{''1''} \\
&\leq &\lambda _{1}^{2}\Vert f\Vert _{s}+(1+\lambda _{1})B\Vert f\Vert _{''1''}.
\end{eqnarray*}

Continuing the composition, noting that the second coefficient keeps being bounded by a geometric sum we get $(\ref{lyw})$.

\end{proof}

Now we are ready to prove that the iterates of the Lebsgue measure have
bounded Lipschitz multipliers.

\begin{lemma} \label{mult}
Let $\mu _{0}$ be the Lebesgue measure on $X$. The set $$A:=\{L^{(0,k)}\mu
_{0},k\in \mathbb{N}\}$$ has bounded Lipschitz multipliers. There is $C_{A}$
such that for each $i$ and $g\in Lip(\mathbb{S}^{1})$%
\begin{equation*}
||g\mu _{i}||_{s}\leq C_{A}||g||_{Lip}.
\end{equation*}
\end{lemma}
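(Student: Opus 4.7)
The plan is to first bound the iterates $\mu_i := L^{(i)}\mu_0$ uniformly in the stronger anisotropic space $"W^{1,1}"$ using Corollary \ref{trima}, then deduce from this a uniform bound on the ordinary Sobolev norm $\|\phi_{\mu_i}\|_{W^{1,1}(\mathbb{S}^1)}$ of the marginal density, and finally exploit the fact that $g\in \mathrm{Lip}(\mathbb{S}^1)$ depends only on the base coordinate to establish $\|g\mu_i\|_s\leq C_A\|g\|_{Lip}$ through an explicit product-rule computation.

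For the first step, the Lebesgue measure $\mu_0$ on $\mathbb{S}^1\times D^2$ has constant marginal density, so $\|\phi'_{\mu_0}\|_1=0$, and its leafwise restrictions $\mu_0|_{\gamma_l}$ do not depend on $l$, so $D(\mu_0,l)\equiv 0$ and $\|\mu_0\|_{"W^{1,1}"}<\infty$. Corollary \ref{trima} then gives a uniform bound
\[
\|\mu_i\|_{"W^{1,1}"}\;\leq\;\lambda^i\bigl(\|\mu_0\|_{"W^{1,1}"}+\|\phi'_{\mu_0}\|_1\bigr)+B\|\mu_0\|_{"1"}\;\leq\;M,\qquad\forall\,i\in\mathbb{N}.
\]

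For the second step, $\mu_i$ is a probability measure, so $\|\phi_{\mu_i}\|_1=1$. To control $\|\phi'_{\mu_i}\|_1$, I test the signed measure $\mu_i|_{\gamma_{l_1}}-\mu_i|_{\gamma_l}$ on $D^2$ against the constant function $1$, which has Lipschitz norm $1$ (because $\mathrm{diam}\,X=1$). The definition of $W$ yields $|\phi_{\mu_i}(l_1)-\phi_{\mu_i}(l)|\leq\|\mu_i|_{\gamma_{l_1}}-\mu_i|_{\gamma_l}\|_W$, whence $|\phi'_{\mu_i}(l)|\leq D(\mu_i,l)$ almost everywhere. Integrating gives $\|\phi'_{\mu_i}\|_1\leq\int |D(\mu_i,l)|\,dl\leq\|\mu_i\|_{"W^{1,1}"}\leq M$, hence $\|\phi_{\mu_i}\|_{W^{1,1}(\mathbb{S}^1)}\leq M+1$ uniformly in $i$.

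For the third step, fix $g\in\mathrm{Lip}(\mathbb{S}^1)$. Since $g^+$ and $g^-$ have disjoint supports and depend only on $l$, the Jordan decomposition of $g\mu_i$ satisfies $(g\mu_i)^{\pm}|_{\gamma_l}=g^{\pm}(l)\,\mu_i|_{\gamma_l}$, and the marginal density equals $\phi_{g\mu_i}=g\,\phi_{\mu_i}$. Using positive homogeneity of $\|\cdot\|_W$ and $\|\mu_i|_{\gamma_l}\|_W=\phi_{\mu_i}(l)$ (again from $\mathrm{diam}\,X=1$),
\[
\|g\mu_i\|_{"1"}=\int_{\mathbb{S}^1}|g(l)|\,\phi_{\mu_i}(l)\,dl\;\leq\;\|g\|_\infty\;\leq\;\|g\|_{Lip},
\]
while the product rule in $W^{1,1}(\mathbb{S}^1)$ gives
\[
\|g\phi_{\mu_i}\|_{W^{1,1}}\;\leq\;\|g\|_\infty\|\phi_{\mu_i}\|_{W^{1,1}}+\|g'\|_\infty\|\phi_{\mu_i}\|_1\;\leq\;\|g\|_{Lip}(M+2).
\]
Summing produces $\|g\mu_i\|_s\leq C_A\|g\|_{Lip}$ with $C_A:=M+3$, independent of $i$, which is the desired uniform bound on Lipschitz multipliers.

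The main technical obstacle is Step 2, where one must pass from the Wasserstein-based anisotropic derivative $D(\mu,\cdot)$ to an $L^1$ bound on the scalar derivative of the marginal density; the comparison $|\phi'_\mu|\leq D(\mu,\cdot)$ follows by testing the definition of $W$ against the constant function $1$, but requires care because the restrictions $\mu|_{\gamma_l}$ are measures on $D^2$ of different total masses and the constant function being a valid test function relies on the diameter normalization.
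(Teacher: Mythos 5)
Your Steps 1 and 2 are sound and essentially match the paper's strategy: the uniform bound on $\|\mu_i\|_{"W^{1,1}"}$ comes from Corollary \ref{trima} exactly as in the paper, and your device of testing $\mu_i|_{\gamma_{l_1}}-\mu_i|_{\gamma_l}$ against the constant function $1$ to get $|\phi_{\mu_i}'|\leq D(\mu_i,\cdot)$ is a legitimate way to transfer the anisotropic bound to the marginal.

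The gap is in Step 3, and it is a real one: you only treat multipliers $g$ that depend on the base coordinate $l\in\mathbb{S}^1$. The identities you rely on, $(g\mu_i)^{\pm}|_{\gamma_l}=g^{\pm}(l)\,\mu_i|_{\gamma_l}$ and $\phi_{g\mu_i}=g\,\phi_{\mu_i}$, are true precisely because $g$ is constant on each leaf, and they fail for a general $g\in Lip(X)$ with $X=\mathbb{S}^1\times D^2$. But the property the lemma must deliver is the one of Definition \ref{def2}, which quantifies over all $\phi\in Lip(X)$; this is what Lemma \ref{bomu} consumes, and in the proof of Proposition \ref{mainprop} the multipliers are bump functions $\phi_k$ of balls $B(y,r_k)$ in $X$, which genuinely depend on the fiber coordinate (the "$Lip(\mathbb{S}^1)$" in the lemma's display is a slip in the paper; its own proof manifestly works with $g(l,y)$). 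For fiber-dependent $g$ your product-rule computation has no analogue: $\phi_{g\mu_i}(l)$ involves $\int_{D^2}g(l,y)\,d\mu_i|_{\gamma_l}(y)$, and differentiating it in $l$ produces two terms — one where $g(l+\delta,\cdot)$ is paired with the variation of the leaf measures (controlled by $\|g\|_\infty\,|D(\mu_i,l)|$) and one where the variation of $g$ in $l$ is paired with $\mu_i|_{\gamma_l}$ (controlled by $Lip(g)\,\|\mu_i|_{\gamma_l}\|_W$). Likewise the bound $\|g\mu\|_{"1"}\leq 2\|g\|_{Lip}\|\mu\|_{"1"}$ requires a leafwise test-function argument (using that $fg/\|g\|_{Lip}$ is an admissible test function up to a factor $2$) rather than pulling $|g(l)|$ out of the $W$-norm. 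This two-term splitting is the actual content of the paper's proof, and it is missing from yours; as written, your argument establishes the multiplier bound only for base-dependent observables, which is not enough to run Theorem \ref{main} for the solenoid.
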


\begin{proof}
We have that $||g\mu ||_{s}=||g\mu ||_{''1''}+||\phi _{g\mu }||_{_{W^{1,1}}}$%
. We first show that for each $\mu \in B_{s}$ the weak norm has bounded
Lipschitz multipliers: 
\begin{equation}
||g\mu ||_{''1''}=\int_{S^{1}}{W(g\mu ^{+}|_{\gamma_l },g\mu
^{-}|_{\gamma_l })}dm(l)\leq 2||g||_{Lip}||\mu ||_{''1''}.  \label{234}
\end{equation}

In order to prove this it is sufficient to show that for each leaf $\gamma $ considering $g\mu
|_{\gamma }$ we have ${W(g\mu ^{+}|_{\gamma },g\mu ^{-}|_{\gamma
})}\leq 2 ||g||_{Lip}{W(\mu ^{+}|_{\gamma },\mu ^{-}|_{\gamma })}$. \
Indeed consider $f~$such that $~Lip(f)\leq 1,~||f||_{\infty }\leq 1$. We
have that also $Lip({f}\frac{{g}}{{||g||}_{Lip}})\leq 2$, $||f\frac{{g}}{{%
||g||}_{Lip}}||_{\infty }\leq 1$ then
\begin{eqnarray*}
{W(g\mu ^{+}|_{\gamma },g\mu ^{-}|_{\gamma })} &{=}%
&\sup_{f~s.t.~Lip(f)\leq 1,~||f||_{\infty }\leq 1}\left\vert \int {f~}d[{%
g\mu ^{-}|_{\gamma }]}-\int {f~}d[{g\mu ^{+}|_{\gamma }}]\right\vert \\
&=&{||g||}_{Lip}\sup_{f~s.t.~Lip(f)\leq 1,~||f||_{\infty }\leq 1}\left\vert
\int {f}\frac{{g|}_{\gamma }}{{||g||}_{Lip}}{~}d[{\mu ^{-}|_{\gamma }]}%
\right. \\
&&\left. -\int {f\frac{{g|}_{\gamma }}{{||g||}_{Lip}}~}d[{\mu ^{+}|_{\gamma }%
}]\right\vert \\
&\leq &2{||g||}_{Lip}{W(g\mu ^{+}|_{\gamma },g\mu ^{-}|_{\gamma }).}
\end{eqnarray*}

From this, integrating we obtain \ref{234}. Since by  Proposition \ref{weakcontral11234}, $%
||L^{(0,k)}\mu _{0}||_{''1''}$ is uniformly bounded as $k\rightarrow \infty $
there is $C_{1,A}$ such that 
\begin{equation*}
||g\mu _{i}||_{''1''}\leq C_{1,A}||g||_{Lip}
\end{equation*}%
for each $i$.

Now we prove that there is $C\geq 0$ such that for all $i,$ 
\begin{equation}
||\phi _{g\mu _{i}}||_{{W^{1,1}}}\leq ||g||_{Lip}C.  \label{ci}
\end{equation}%
Let $l\in \mathbb{S}^1$. We have
\begin{eqnarray*}
|\limsup_{\delta \rightarrow 0} \frac{\phi _{g\mu _{i}}(l+\delta)-\phi _{g\mu _{i}}(l)}{\delta}| &=&|\limsup_{\delta \rightarrow 0}%
\frac{\int_{D}1d[g\mu _{i}|_{\gamma_{l +\delta} }]-\int_{D}1d[g\mu _{i}|_{\gamma_l}]}{%
\delta }| \\
&=&|\limsup_{\delta \rightarrow 0}\frac{\int_{D}g(l +\delta ,y)d\mu
_{i}|_{\gamma_{ l+\delta} }(y)-\int_{D}g(l ,y)~d\mu _{i}|_{\gamma_l }(y)}{%
\delta }| \\
&\leq &|\limsup_{\delta \rightarrow 0}\frac{\int_{D}g(l +\delta ,y)d\mu
_{i}|_{\gamma_{l +\delta} }(y)-\int_{D}g(l+\delta ,y)~d\mu _{i}|_{\gamma_l
}(y)}{\delta }| \\
&&+|\limsup_{\delta \rightarrow 0}\frac{\int_{D}g(l +\delta ,y)d\mu
_{i}|_{\gamma_l }(y)-\int_{D}g(l ,y)~d\mu _{i}|_{\gamma_l }(y)}{\delta }| \\
&\leq &||g||_{Lip}|D(\mu _{i},l )|+||g||_{Lip}||\mu _{i}|_{\gamma_l }||_W.
\end{eqnarray*}
This shows that $\phi_{g\mu_i}$ is absolutely continuous and then in the  Sobolev space $W^{1,1}$ furthermore, integrating over $\mathbb{S}^{1},$ $(\ref{ci})$ \ is satistifed with $%
C=||\mu _{i}||_{{{''{W^{1,1}}''}}}+||\mu _{i}||_{s}$.
Since by Corollary \ref{LYYYYYY} we have that $||\mu _{i}||_{{{''{W^{1,1}}''}}}$ is uniformly bounded as $i$ vary we establish the Lemma.
\end{proof}

\subsection{Superpolynomial convergence to equilibrium for the family of Solenoidal maps and the proof of Proposition \ref{solw}.}

Now we apply the results of the Apendix, Section \ref{sec1} to a family of solenoidal maps satisfying the assumptions $(a),(b),(c)$ stated at the beginning of Section \ref{yp}.

\begin{proposition}\label{wlos}
Let $F_{i}$ be a a sequence of maps satisfying the assumptions $(a),(b),(c)$. Let $B_s$ be the space defined in \eqref{BS}.
Let $\mu\in B_s$ be the invariant probability measure of the limit map $F_0$. Let $L_{F_i}$ the sequence of transfer operators associated to $F_i$.
Then the sequence $L_{F_i}$ ha superpolynomial
convergence to equilibrium to $\mu$ in the following strong sense.  Denoting as before 
$L^{(j,j+n-1)}:=L_{F_{j+n-1}}\circ ... \circ L_{F_{j}}$, there are $C,\lambda\geq 0$  such that $\forall j,n\in \mathbb{N}$, $\mu _{0}\in B_{s}$
\begin{equation}\label{23}
||\mu -L^{(j,j+n-1)}\mu _{0}||_{s}\leq \Phi(n)max(1,||\mu -\mu _{0}||_{s}).
\end{equation}
\end{proposition}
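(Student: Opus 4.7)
The plan is to check that the sequence $L_{F_i}$ fits the abstract framework $(ML1)$--$(ML3)$ of Section \ref{sec1}, apply Lemma \ref{losmem} to obtain strong exponential loss of memory on $V_s$, and combine this with a telescoping identity that exploits a special feature of the skew-product structure.

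First I would verify the three hypotheses. $(ML1)$, the uniform Lasota--Yorke for the pair $(\|\cdot\|_{"1"},\|\cdot\|_s)$, is Lemma \ref{17} with constants uniform in $i$ thanks to $(a)$--$(b)$. $(ML3)$ is Proposition \ref{5.8} applied to $L_{F_0}$. For $(ML2)$ I would establish the quantitative form
$$\|(L_{F_i}-L_{F_0})\nu\|_{"1"} \leq C\Phi(i)\|\nu\|_s \qquad \text{for all }\nu\in B_s:$$
using the leaf-by-leaf formula of Proposition \ref{niceformulaab}, the difference decomposes into a sum of terms $F_{i,\gamma}^{*}(\nu|_\gamma)-F_{0,\gamma}^{*}(\nu|_\gamma)$, and since $|F_{i,\gamma}(y)-F_{0,\gamma}(y)|=|G_i(l,y)-G_0(l,y)|\leq\Phi(i)$ by $(c)$, the Wasserstein--Kantorovich distance between the two pushforwards is bounded by $\Phi(i)$ times the mass of $\nu|_\gamma$; integrating over leaves produces the stated bound.

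With these hypotheses in hand, Lemma \ref{losmem} yields constants $C,\lambda>0$ with $\|L^{(j,j+n-1)}g\|_s\leq Ce^{-\lambda n}\|g\|_s$ for every $g\in V_s$. I then split
$$\mu - L^{(j,j+n-1)}\mu_0 = L^{(j,j+n-1)}(\mu-\mu_0) + (\mu - L^{(j,j+n-1)}\mu).$$
Applied to $\mu-\mu_0\in V_s$, loss of memory handles the first piece. For the second, using $L_{F_0}\mu=\mu$, the standard interpolation between $\mu=L_{F_0}^n\mu$ and $L^{(j,j+n-1)}\mu$ gives the telescoping identity
$$L^{(j,j+n-1)}\mu - \mu = \sum_{m=0}^{n-1} L^{(j+m+1,j+n-1)}(L_{F_{j+m}}\mu - \mu),$$
so the task reduces to estimating $\|L_{F_i}\mu-\mu\|_s$ (each summand lies in $V_s$, allowing loss of memory to be applied term by term).

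The main obstacle, and key step, is showing that $\|L_{F_i}\mu-\mu\|_s$ decays at the superpolynomial rate $\Phi(i)$: the operator convergence established in $(ML2)$ is only in the $s\to w$ topology, so in the strong norm one only has a trivial constant bound, which would make the telescoping sum diverge. The rescue comes from the rigid skew-product structure: since the base dynamics $T$ is independent of $i$, the marginal density of $L_{F_i}\mu$ equals $L_T\phi_\mu=\phi_\mu$, hence $\phi_{L_{F_i}\mu-\mu}\equiv 0$ and the $W^{1,1}$-component of $\|L_{F_i}\mu-\mu\|_s$ vanishes; therefore $\|L_{F_i}\mu-\mu\|_s = \|L_{F_i}\mu-\mu\|_{"1"}\leq C\Phi(i)$ by exactly the leaf-by-leaf estimate used for $(ML2)$. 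Substituting into the telescoping sum,
$$\|L^{(j,j+n-1)}\mu - \mu\|_s \leq CC'\sum_{m=0}^{n-1}e^{-\lambda(n-m-1)}\Phi(j+m),$$
and splitting the sum at $m=n/2$, together with the fact that $\Phi$ is decreasing and superpolynomial, bounds this by a superpolynomially decreasing function of $n$ uniformly in $j$. Combining with the bound on the first piece produces the claimed estimate.
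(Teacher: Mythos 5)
Your proof is correct and rests on the same two pillars as the paper's: verification of $(ML1)$--$(ML3)$ so that Lemma \ref{losmem} applies, and the structural observation that $\|L_{F_i}\mu-\mu\|_{s}\lesssim\Phi(i)$ because the base map $T$ does not depend on $i$, so the marginal of $L_{F_i}\mu-\mu$ vanishes and only the leafwise Wasserstein part survives. You make this last point explicit, whereas the paper only alludes to it ("the map only changes on the leaves..."); it is indeed the crucial step, since without it the $W^{1,1}$ component of the strong norm could not be controlled by the $C^{0}$ closeness of $G_i$ to $G_0$. Where you genuinely diverge is in the final bookkeeping: the paper extracts from Lemma \ref{losmem} a block contraction $\|L^{(j,j+M-1)}(\mu-\mu_0)\|_s\le\frac12\|\mu-\mu_0\|_s+M\Phi(j)$ and iterates it as a scalar recursion $d_{k+1}\le\frac12 d_k+M\Phi(j+Mk)$, while you use a one-shot telescoping of $L^{(j,j+n-1)}\mu-\mu$ into one-step errors $L_{F_{j+m}}\mu-\mu\in V_s$ and apply the exponential loss of memory term by term, then split the convolution-type sum at $m=n/2$. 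The two are equivalent in strength; your version avoids the recursion-with-forcing argument of the paper's footnote at the cost of needing the full strength of Lemma \ref{losmem} (uniform exponential decay in $n$ for all starting indices) rather than a single contraction step. One shared loose end: Lemma \ref{losmem} is really established only for starting indices $j\ge N_1$, so for small $j$ both arguments need to absorb the finitely many initial iterates into the constant using the uniform Lasota--Yorke bound; this is routine but worth a sentence.
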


\begin{proof}
We will apply Lemma  \ref{losmem} to the family of transfer operators $L_{F_i}$ using as strong space $B_s$ the one defined in \eqref{BS} and as a weak space $B_w$ the one defined in \eqref{l1likespace}.
By Lemma \ref{17} and \eqref{weakcontral11234} the action of the transfer operators on these two spaces satisfy the assumption $(ML1)$. By assumption (c) $(ML2)$ is satisfied. By Proposition \ref{5.8} we have that $(ML3)$ is satisfied.
We can then apply Lemma \ref{losmem} and get that there are $N,M\geq0$ such that for any $j\geq N$   
\begin{equation*}
||L^{(j,j+M-1)}(\mu -\mu _{0})||_{s}\leq \frac{1}{2}
||\mu -\mu _{0}||_{s}.
\end{equation*}
Since
\begin{equation*}
L^{(j,j+M-1)}(\mu -\mu _{0})=L^{(j,j+M-1)}(\mu)-L^{(j,j+M-1)}(\mu_0)
\end{equation*}
by $(c)$, considering that the map only changes on the leaves, where the Wasserstein like distance is considered on positive measures, by $(c)$ we have
\begin{equation*}
||L^{(j,j+M-1)}(\mu)-\mu||_s\leq M\Phi(j)
\end{equation*}
and then
\begin{equation*}
||\mu- L^{(j,j+M-1)}(\mu _{0})||_{s}\leq \frac{1}{2}
||\mu -\mu _{0}||_{s} + M\Phi(j).
\end{equation*}

Denoting   $d_k:=||L^{(j,j+kM-1)}(\mu_0)-\mu||_s $, for $k\geq 0$ the above computation shows that $d_0:=||\mu_0-\mu||_s $,
$d_{k+1}\leq\frac{1}{2}d_k+M \Phi(j+Mk)\leq \frac{1}{2}d_k+M \Phi(j+Mk)[max(1,d_0)]$ showing that $d_k$ decreases superpolynomially fast, satisfying \eqref{23}\footnote{If we have $d_{k+1}\leq\frac{1}{2}d_k+a_n$ with $a_n$ decreasing superpolynomially, then one can rewrite the relation as $d_{k+1}-2a_n\leq\frac{1}{2}(d_k-2a_n)  $ showing that $d_{k+1}$  converges to $2a_n$ exponentially fast.
}.


\end{proof}

\begin{proof}[Proof (of Proposition \ref{solw})]
 The proof of the statement directly follows from the application of Theorem \ref{main}.

The boundedness of the set  $A=\{L^{(0,k)}\mu
_{0},k\in \mathbb{N}\}$ in $B_s$ and of the Lipschitz multipliers is verified in Lemma \ref{mult}, the superpolynomial strong convergence to equilibrium for the family of maps we consider is verified in Lemma \ref{wlos}. The assumption \eqref{preimage} is trivially verified.
This provides the assumptions necessary to apply Theorem \ref{main}, establishing the result.
 
\end{proof}

\begin{remark}
    We remark that in order to get a logarithm law as in Theorem \ref{main} for an eventually autonomous system like the ones considered in this section, a quantitative bound on the speed of convergence of the sequence of the sequence of maps $F_i\to F_0$ is necessary. 
    Let us indeed consider $i\geq 1$ and a family with a slow convergence like $F_i(x,y)=(2x  \mod(1), [\begin{smallmatrix} 
	\frac{1} {\sqrt{i}}  \\ 
	 0
	\end{smallmatrix}]  )$.
for this family of maps the limit map is $F_0$ with    $F_0(x,y)=(2x  \mod(1),  [\begin{smallmatrix} 
	0  \\ 
	 0
	\end{smallmatrix}])$ whose physical measure is the one dimensional lebesgue measure on $S:=\mathbb{S}^1\times \{0\}.$ So $d_\mu(x)=1$ for each $x\in S$. Let us fix $y\in S$ as a target point. Let us consider $i\in \mathbb N $ and an initial condition $x_0$ such that $d(x_0, x)> 0$. We  remark that because of the slow convegence to $F_i$ to $F_0$ we have that for $i,j\geq 1$, $d(F^{(j)}(x_0), x)\leq \frac{1}{i}$ implies that $j\geq i^2$.
This implies that for this system $\liminf_{r\to 0}\frac{log(\tau_r(x_0,y))}{-log(r)}\geq2>d_\mu(y).$
Showing that a logarithm law as in Theorem \ref{main} cannot hold in this case.
\end{remark}

\section{Application to mean field coupled maps}
\label{coupmap}
In this section we show how to apply our main results to a system of mean
field coupled expanding maps, obtaining a logarithm law for this kind of systems. 

To this goal, we will use known results on the convergence to equilibrium of mean field coupled systems.
Those results allow to treat the dynamics of a typical subsystem of the mean field coupled system  as a nonautonomous system having fast convergence to equilibrium (see Definition \ref{def1}). 
 This  general idea is hence applied in this section to a particularly simple class of chaotic systems.

\subsection{A model for infinitely many mean field coupled maps}

We now define a model for the dynamics of an infinite family of expanding maps
interacting in the mean field. 
The mean field system will be composed by infinitely many interacting subsystems, where the dynamics is given by some expanding map, perturbed deterministically by the state of all the other systems in a way which we are going to describe in this subsection.

The phase
space for each interacting subsystem is the unit circle $\mathbb{S}^{1}$, we
will equip $\mathbb{S}^{1}$ with the Borel $\sigma -$algebra.

Let us consider an
additional metric space $M$ \ equipped with the Borel $\sigma -$algebra and
a probability measure $p\in PM(M)$. \ Let us consider a collection of {%
identical $C^{6}$ expanding maps} $(\mathbb{S}^{1},T)_{i}$, with $i\in M$. An admissible global state for the dynamics of this extended system at some
time $t$ is given by a measurable function $\mathbf{x}_{t}:M\rightarrow 
\mathbb{S}^{1}$ associating to every $i\in M$ the state $x_{0}(i)$ of the
subsystem $(\mathbb{S}^{1},T)_{i}$.

We say that the global state $\mathbf{x}_{t}$ of the system is represented
by a probability measure $\mu _{\mathbf{x}_{t}}\in PM(\mathbb{S}^{1})$ if 
\begin{equation*}
\mu _{\mathbf{x}_{t}}=L_{\mathbf{x}_{t}}(p)
\end{equation*}%
(the pushforward of $p$ by the function $\mathbf{x}_{t}$). Let $\mathcal{X}$
be the set of such measurable functions $M\rightarrow \mathbb{S}^{1}$
defining the admissible global states of the system. We now define the dynamics of the
interacting systems by defining a global map $\mathcal{T}:\mathcal{X}%
\rightarrow \mathcal{X}$ and global trajectory of the system by%
\begin{equation*}
\mathbf{x}_{t+1}:=\mathcal{T}(\mathbf{x}_{t})
\end{equation*}%
\ where $\mathbf{x}_{t+1}$ is defined on every coordinate by applying at
each step the common local dynamics $T$, {plus a perturbation given by the
mean field interaction with the other systems}, by%
\begin{equation}\label{mex}
x_{t+1}(i)=\Phi _{\delta ,\mathbf{x}_{t}}\circ T(x_{t}(i))
\end{equation}%
for all $i\in M$, \ where $\Phi _{\delta ,\mathbf{x}_{t}}:\mathbb{S}%
^{1}\rightarrow \mathbb{S}^{1}$ is a diffeomorphism near to the identity when $\delta $ is small  and represents the
perturbation provided by the global mean field coupling. 
 Let us consider  a coupling function $\ h\in C^{6}(\mathbb{
S}^{1}\times \mathbb{S}^{1}\rightarrow \mathbb{R})$.
The function $h(x,y)$ represents the way in which the
presence of some subsystem in the state $y\in \mathbb{S}^{1}$ perturbs a
certain subsystems in the state $x\in \mathbb{S}^{1}$.
The mean field perturbation $\Phi _{\delta ,\mathbf{x}_{t}}$ with strength $
\delta \geq 0$ is defined in the following way: let $\pi _{\mathbb{S}^{1}}: \mathbb{R}\rightarrow \mathbb{S}^{1}$ be the universal covering projection,
; we define \ $\Phi
_{\delta ,\mathbf{x}_{t}}$ as 
\begin{equation}
\Phi _{\delta ,\mathbf{x}_{t}}(x):=x+\pi _{\mathbb{S}^{1}}(\delta \int_{%
\mathbb{S}^{1}}h(x,y)~d\mu _{\mathbf{x}_{t}}(y)).  \label{diffeo}
\end{equation}
We remark that in this definition the parameter $\delta$ plays the role of the strength of the coupling.
Since \eqref{mex} is clearly a measurable map  we see that the measure representing the current state of the system fully
determines the measure which represents the next state of the system,
defining a function between measures $\mu _{\mathbf{x}_{t}}\rightarrow \mu _{%
\mathbf{x}_{t+1}}$ defined as%
\begin{equation*}
\mu _{\mathbf{x}_{t+1}}=L_{\Phi _{\delta ,\mu _{_{\mathbf{x}_{t}}}}\circ
T}(\mu _{\mathbf{x}_{t}}):=\mathcal{L}_{\delta }(\mu _{\mathbf{x}_{t}}).
\end{equation*}
Now, let us consider $\delta \geq 0$ and denote by $(\mathbb{S}^{1},T,\delta ,h)$ the extended
system in which these maps are coupled by  $h$ as explained above.
The function $\mathcal{L}_{\delta }$ is also called to be the Self Consistent Transfer Operator associated to the mean field coupled system $(\mathbb{S}^{1},T,\delta ,h)$.

Since in every subsystem and coordinate, at each iteration,
the map $\Phi _{\delta ,\mu _{_{\mathbf{x}_{t}}}}\circ T$ is applied, if we observe the evolution of a single coordinate,
 we see the result of the application of a nonautonomous dynamical system $(\mathbb{S}%
^{1},T_{n})$ where $T_{n}=\Phi _{\delta ,\mu _{_{\mathbf{x}_{n}}}}\circ T$.
\  

The transfer operators associated to expanding maps are well known to preserve absolutely continuous measures (see \cite{Gdisp}) and in particular measures having a density \footnote{We say that a measure $\mu$ on $\mathbb{S}^1$ has a density $f_\mu$ if $f_\mu=\frac{d\mu}{dm}$, the Radon Nikodym derivative of $\mu$ with respect to the Lebesgue measure $m$ on $\mathbb{S}^{1}$.} in the Sobolev space $W^{1,1}$.
For this reason we will consider such space as a strong space $B_s $ in the following.
In the case where $T$ is an expanding map and $x_{0}$ is represented by a measure $\mu_{x_0}$ which is smooth enough we can establish a logarithm law for the dynamics of each coordinate.

For this kind of extended system we prove:

\begin{proposition}\label{explog}
Let us fix $i\in M$ and let $x_t(i)$ the evolution of the $i-$th coordinate of the mean field coupled  system  $(\mathbb{S}^{1},T,\delta ,h)$ as defined above. Let us suppose that  the global initial condition of the system is distributed in a smooth way, that is $\mu_{x_0}$ is an absolutely continuous measure having density in $W^{1,1}$. \footnote{We remark that the global initial distribution $\mu_{x_0}$ and its evolution in time does not depend on the single $i-$th subsystem initial condition $x_0(i)$.} 
We will also suppose that the coupling is small. In the sense that there is $\hat \delta>0$ such that for each $0\leq\delta \leq \hat \delta$ the following result will hold.
We define the hitting time of a small target centered at $y$ for the $i-$th subsystem with initial condition  $x_0(i)$ as $$\tau_r(x_0(i),y)=sup(\{t\geq 0| d(x_t(i),y) \geq r\}).$$
Let $m$ be the Lebesgue measure on $\mathbb{S}^{1}$.
Then for each $y\in \mathbb{S}^{1}$ and $m$ almost each  $x_0(i)$  it holds
\begin{equation}\label{112}
\underset{r\rightarrow 0}{\lim }\frac{\log \tau _{r}(x_0(i),x)}{-\log r}%
=1.
\end{equation}
   
\end{proposition}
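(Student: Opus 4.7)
The plan is to apply Theorem \ref{main} to the nonautonomous system $(\mathbb{S}^{1},T_n)$ with $T_n=\Phi_{\delta,\mu_{\mathbf{x}_n}}\circ T$, taking as strong space $B_s$ the space of signed measures with density in $W^{1,1}(\mathbb{S}^{1})$ (with the natural norm) and as weak space the space of $L^1$ densities, whose norm is bounded above by the Wasserstein--Kantorovich norm $W$. Since the hypotheses of the proposition fix $\mu_{\mathbf{x}_0}$ absolutely continuous with density in $W^{1,1}$, the global evolution $\mu_{\mathbf{x}_n}=\mathcal{L}_\delta^n(\mu_{\mathbf{x}_0})$ is completely determined by the self consistent transfer operator and is independent of the particular coordinate $x_0(i)$ chosen.

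The core step is to identify a limit map $\bar T:=\Phi_{\delta,\bar\mu}\circ T$, where $\bar\mu$ is a fixed point of the self consistent transfer operator $\mathcal{L}_\delta$. Using the known theory of mean field coupled expanding maps (which for $\delta\leq \hat\delta$ produces, via a contraction argument on $\mathcal{L}_\delta$ in $W^{1,1}$, a unique absolutely continuous fixed point $\bar\mu$ with smooth density bounded away from zero and exponential convergence $\|\mathcal{L}_\delta^n\mu_{\mathbf{x}_0}-\bar\mu\|_{W^{1,1}}\leq C\rho^n$ for some $\rho<1$), one obtains that the nonautonomous system $(T_n)$ is asymptotically autonomous: $T_n\to \bar T$ exponentially fast in $C^1$, and consequently $\|L_{T_n}-L_{\bar T}\|_{B_s\to B_w}\to 0$ exponentially. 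From here I would apply the abstract machinery of Section \ref{sec1}. Assumption $(ML1)$ is the uniform Lasota--Yorke inequality in $W^{1,1}$ for the maps $T_n$, which holds because the $\Phi_{\delta,\mu}$ are uniformly $C^2$ close to the identity for $\delta$ small and $T$ is $C^6$ expanding; $(ML2)$ is exactly the exponential $L_{T_n}\to L_{\bar T}$ just established; $(ML3)$ is the exponential decay of correlations of $L_{\bar T}$ on zero average densities in $W^{1,1}$, which is classical for smooth expanding maps. Lemma \ref{losmem} then yields strong exponential loss of memory of the sequential composition $L^{(j,j+n-1)}$, which, arguing as in Proposition \ref{wlos}, gives the superpolynomial convergence to equilibrium to $\bar\mu$ required by Definition \ref{def1}.

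It remains to verify the other hypotheses of Theorem \ref{main}. Boundedness of the set $A=\{\mu_k:=L^{(k)}\mu_{\mathbf{x}_0}\}$ in $B_s$ is a direct consequence of the uniform Lasota--Yorke inequality applied iteratively (as in Lemma \ref{lasotaY copy(1)}). The uniformly bounded Lipschitz multipliers property follows from the elementary fact that multiplication by a Lipschitz function $g$ maps $W^{1,1}(\mathbb{S}^{1})$ into itself with $\|g\phi\|_{W^{1,1}}\leq 2\|g\|_{Lip}\|\phi\|_{W^{1,1}}$, combined with the uniform bound on $\|\phi_{\mu_k}\|_{W^{1,1}}$. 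The preimage condition \eqref{preimage} is immediate since each $T_n$ is a finite-to-one local diffeomorphism and the initial coordinate distribution is Lebesgue (use the remark after Theorem \ref{main}). Finally, since $\bar\mu$ has a smooth density which is strictly positive on $\mathbb{S}^{1}$, one has $d_{\bar\mu}(y)=1$ for every $y\in\mathbb{S}^{1}$, giving the value $1$ on the right hand side of \eqref{112}.

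The main obstacle is the verification of $(ML2)$, which requires establishing the exponential convergence $\mu_{\mathbf{x}_n}\to\bar\mu$ of the self consistent dynamics and therefore depends on invoking (or reproving in this specific setting) the fixed point and contraction properties of $\mathcal{L}_\delta$ on a suitable smooth space for $\delta$ small enough; once this is in hand, everything else is a relatively direct application of the general machinery already developed in Sections \ref{sec2} and \ref{sec1}.
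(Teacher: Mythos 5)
Your proposal is correct and follows the same overall template as the paper (apply Theorem \ref{main} to $(\mathbb{S}^1,T_n)$ with $T_n=\Phi_{\delta,\mu_{\mathbf{x}_n}}\circ T$, strong space the $W^{1,1}$ densities, conclude $d_{\mu_\delta}(y)=1$ from the regularity of the invariant density), but you supply the convergence-to-equilibrium hypothesis by a genuinely different route. The paper simply invokes Proposition \ref{convmaps} (the known exponential convergence of the self-consistent operator $\mathcal{L}_\delta^n(\nu)\to\mu_\delta$ in $W^{1,1}$), identifying $L^{(k)}\mu_{\mathbf{x}_0}$ with $\mathcal{L}_\delta^k(\mu_{\mathbf{x}_0})$ along the actual global orbit; you instead treat the sequence $T_n$ as asymptotically autonomous with limit $\bar T=\Phi_{\delta,\bar\mu}\circ T$ and run the abstract machinery of Section \ref{sec1} ($(ML1)$--$(ML3)$ plus Lemma \ref{losmem}, then the Proposition \ref{wlos}-style bootstrap). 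Your route is longer, and its cost is that $(ML2)$ still rests on the fixed point and contraction theory for $\mathcal{L}_\delta$ — so you have not avoided citing that input, only repackaged it. What it buys is worth noting: Definition \ref{def1} asks for a bound on $\|\mu-L^{(j,k)}\mu_0\|_W$ for \emph{all} starting indices $j$ and \emph{all} probability measures $\mu_0\in B_s$, i.e.\ for sequential compositions of the linear operators $L_{T_i}$ applied to arbitrary data, whereas Proposition \ref{convmaps} literally speaks only of iterates of the nonlinear operator $\mathcal{L}_\delta$; your detour through Lemma \ref{losmem} addresses exactly this sequential, uniform-in-$j$ form and is therefore arguably the more careful verification. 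One minor slip: you write that the $L^1$ norm of the densities "is bounded above by the Wasserstein--Kantorovich norm $W$"; the inequality goes the other way ($\|\cdot\|_W\le\|\cdot\|_{L^1}\le\|\cdot\|_{W^{1,1}}$), which is in fact the direction Definition \ref{def1} requires, so the framework still applies.
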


To prove Proposition \ref{explog} we need some preliminary results  we will take from \cite{GalCMP}, Section 7.
The following statement shows that our mean field coupled system has a unique regular invariant measure when \ $\delta $ is small enough.

\begin{proposition}[Existence and uniqueness of the invariant measure]
\label{existenceexp}Let  $(\mathbb{S}^{1},T,\delta ,h)$ as above and let and $\mathcal{L}_{\delta }$ be the associated self consistent transfer operator. If $\delta >0$ is small enough then there is a unique probability
measure $\mu _{\delta }$\ having density $f_{\delta }\in L^{1}$ such that%
\begin{equation*}
\mathcal{L}_{\delta }(\mu _{\delta })=\mu _{\delta }.
\end{equation*}

Furthermore $f_{\delta }\in W^{5,1}.$
\end{proposition}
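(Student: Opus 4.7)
The plan is to view the nonlinear self-consistent operator $\mathcal{L}_\delta$ as a family of linear transfer operators parametrized by the input measure. For any probability measure $\mu\in PM(\mathbb{S}^1)$, consider $T_\mu := \Phi_{\delta,\mu}\circ T$. Since $h\in C^6$ and $\mu$ is a probability, $x \mapsto \int h(x,y)\,d\mu(y)$ has $C^6$ norm bounded uniformly in $\mu$, so $\Phi_{\delta,\mu}$ is a $C^6$ diffeomorphism that is $O(\delta)$-close to the identity, uniformly in $\mu$. For $\delta$ small enough, $T_\mu$ is therefore a $C^6$ expanding map of the same degree as $T$, with uniform lower bound on $|T_\mu'|$ and uniform bounds on all derivatives up to order $6$. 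In particular $\mathcal{L}_\delta(\mu)=L_{T_\mu}(\mu)$, and the family $\{L_{T_\mu}\}_\mu$ satisfies a common Lasota--Yorke inequality on the Sobolev scale $L^1 \subset W^{1,1} \subset \cdots \subset W^{5,1}$ with uniform constants.

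From this uniform Lasota--Yorke inequality and the classical spectral theory of $C^6$ expanding maps, each $L_{T_\mu}$ has a unique invariant probability density $f_\mu\in W^{5,1}$, with a uniform bound $\|f_\mu\|_{W^{5,1}}\leq C$ and a uniform spectral gap. For existence, I would apply the Schauder--Tychonoff theorem to the map $\Psi_\delta:\mu\mapsto f_\mu\,m$ on the closed convex set
\[
\mathcal{K}:=\{\mu\in PM(\mathbb{S}^1) : f_\mu\in W^{5,1},\ \|f_\mu\|_{W^{5,1}}\leq C\}.
\]
This set is compact in the weaker topology induced by the Wasserstein-like norm $W$ (or by $W^{4,1}$) thanks to compact Sobolev embedding, and continuity of $\Psi_\delta$ in such a topology follows from continuous dependence of $T_\mu$ on $\mu$ combined with the uniform spectral gap. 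The $W^{5,1}$-regularity of the fixed density $f_\delta$ is then automatic from its membership in $\mathcal{K}$.

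For uniqueness, I would instead establish a contraction estimate
\[
\|f_{\mu_1}-f_{\mu_2}\|_{*} \;\leq\; \tilde C\,\delta\,\|\mu_1-\mu_2\|_{*}
\]
in an appropriate weak norm $\|\cdot\|_{*}$ (e.g.\ the $W$ norm). Because $\Phi_{\delta,\mu_1}-\Phi_{\delta,\mu_2}$ depends linearly on $\mu_1-\mu_2$ with coefficient $\delta$, one gets
\[
\|L_{T_{\mu_1}}-L_{T_{\mu_2}}\|_{W^{5,1}\to\|\cdot\|_{*}}\;\leq\; C'\delta\,\|\mu_1-\mu_2\|_{*}.
\]
Combining this with the uniform spectral gap of each $L_{T_{\mu_i}}$ via the standard perturbation-of-fixed-point argument (projection onto the one-dimensional eigenspace spanned by $f_{\mu_i}$), together with the uniform $W^{5,1}$-bound on $f_{\mu_i}$, yields the desired contraction, giving uniqueness (and a second proof of existence) for $\delta$ small. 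The main obstacle is precisely this quantitative perturbation step: the strong norm $W^{5,1}$ (where the Lasota--Yorke inequality lives) and the weak norm $\|\cdot\|_{*}$ (where the contraction is obtained) must be chosen so that uniform spectral gap, $O(\delta)$-proximity of $T_{\mu_1}$ to $T_{\mu_2}$, and uniform Sobolev control combine coherently; this requires careful bookkeeping of constants along the Sobolev scale to ensure they are uniform over $\mathcal{K}$.
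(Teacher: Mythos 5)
The paper does not actually prove Proposition \ref{existenceexp}: it is imported verbatim from \cite{GalCMP}, Section 7, so there is no in-paper argument to compare against. Judged on its own, your outline is the standard and correct strategy for self-consistent transfer operators at weak coupling, and it is essentially the strategy of the cited source: freeze the measure to get a uniform family of $C^6$ expanding maps $T_\mu=\Phi_{\delta,\mu}\circ T$, use uniform Lasota--Yorke bounds on the Sobolev scale to get uniformly bounded invariant densities $f_\mu\in W^{5,1}$ with a uniform gap, obtain existence by a fixed-point theorem for $\mu\mapsto f_\mu m$, and obtain uniqueness from the $O(\delta)$-Lipschitz dependence of $L_{T_\mu}$ on $\mu$ in a strong-to-weak operator norm combined with the gap. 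Two points deserve to be made explicit rather than asserted. First, the \emph{uniform} spectral gap (equivalently, uniform exponential convergence to equilibrium) over the whole family $\{T_\mu\}$ is not a formal consequence of a common Lasota--Yorke inequality alone; you should either invoke a uniform cone-contraction argument for uniformly expanding maps of fixed degree, or note that for small $\delta$ every $T_\mu$ is a uniformly small perturbation of $T$ and appeal to Keller--Liverani stability. Second, the proposition asserts uniqueness among fixed points with merely $L^1$ density, while your contraction lives on the regular set $\mathcal{K}$; this is fine, but you should add the bootstrap: any a.c.\ fixed point of $\mathcal{L}_\delta$ is the a.c.\ invariant measure of the expanding map $T_\mu$, hence automatically lies in $W^{5,1}$ with the uniform bound, so the contraction applies to it (this also yields the final regularity claim $f_\delta\in W^{5,1}$). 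Also note that as written your set $\mathcal{K}$ is defined through $f_\mu$, which you had reserved for the invariant density of $T_\mu$; you mean the set of probability measures whose own density lies in the $W^{5,1}$-ball, which is the convex set Schauder needs.
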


The following statement is an estimate for the speed of convergence to
equilibrium of mean field coupled expanding maps (see \cite{Kel}, or \cite{TaSe}
for similar statements).

\begin{proposition}[Exponential convergence to equilibrium]
\label{convmaps}Let $\mathcal{L}_{\delta }$ be the family of self-consistent
transfer operators arising $(\mathbb{S}^{1},T,\delta ,h)$  as above.
Let $\mu_{\delta }$ be the absolutely continuous invariant
probability measure of $\mathcal{L}_{\delta }.$ Let us denote by $f_\delta\in W^{1,1}$  the density of $\mu_\delta$ with respect to the Lebesgue measure $m$. Then there exists $\overline{%
\delta }>0$ \ and $C,\gamma \geq 0$ such that for all $0<\delta <\overline{%
\delta }$, and each probability measure\ $\nu $ having density $f_{\nu }\in
W^{1,1}$ we have%
\begin{equation*}
||\frac{d}{dm}\mathcal{L}_{\delta }^{n}(\nu )-f_{\delta }||_{W^{1,1}}\leq Ce^{-\gamma
n}||f_\nu -f_{\mu _{\delta }}||_{W^{1,1}}
\end{equation*}
were we recall that the notation $\frac{d}{dm}$ represents the Radon Nykodim derivative with respect to the Lebesgue meaure $m$.
\end{proposition}

We can now apply Theorem \ref{main} \ to get \
a logarithm law result for mean field coupled maps.


\begin{proof}[Proof of Proposition \ref{explog}]
We will get the result by a direct application of Theorem \ref{main} to the nonautonomous system
$(\mathbb{S}^{1},T_{i})$ where $T_i=\Phi_{\delta,\mu_{x_i}}\circ T$ and  $\mu_{x_i}$ is the measure representing the global state at time $i$, satisfying $\mu_{x_i}=\mathcal{L}^i(\mu_{x_o})$.
We will consider as a strong space $B_s$ the space of signed measures having a density in  $W^{1,1}$ with the topology induced by the one  on $W^{1,1}$. We remark that this topology is stronger than then one induced by the $W$ distance. Furthermore we remark that by Proposition \ref{convmaps} the set $$A=\{\mathcal{L}^i(\mu_{x_o})\}_{i\in \mathbb{N}}$$ is bounded in $B_s$ and has obviously bounded Lipshitz multipliers.
By Proposition \ref{convmaps} we also see that $\mu _{n}:=\mathcal{L}^i(\mu_{x_o})$ converge exponentially fast to the invariant measure 
$\mu_\delta \in W^{1,1}$, then Theorem \ref{main} can be
applied. We remark that since $\mu_\delta$ has density $f_\delta \in W^{1,1}$, $d_{\mu }(y)=1$ $\forall
y\in \mathbb{S}^{1}$ establishing \ref{112}.
\end{proof}

\section{ Appendix: exponential loss of memory for sequential composition of operators \label{sec1}}

In this section, we show a relatively simple and general argument that establishes exponential loss of memory for a sequential composition of Markov operators converging to a limit. This is used as a tool in Section \ref{yp} to establish a fast convergence to equilibrium for a sequential composition of solenoidal maps.
 The results we present are similar to those of \cite{CR07}, although proved in a more general context, allowing the application to solenoidal maps.
The methods used are also inspired by the constructive methods used in \cite{GNS15}.
Since the approach is general, we will work in an abstract framework, stating a result that holds for a sequence of Markov operators acting on suitable spaces of measures.
Let $B_{w}$  and $B_{s}$ be  normed vector subspaces of signed measures on $X$.
Suppose $(B_{s},||~||_{s})\subseteq $ $(B_{w},||~||_{w})$ and $||~||_{s}\geq
||~||_{w}$.
Let us consider a sequence of Markov operators $\{L_{i}\}_{i\in \mathbb{N}%
}:B_{s}\rightarrow B_{s}.$ \ We will suppose furthermore that the following 
assumptions are satisfied by the $L_{i}$:

\begin{itemize}
\item[$ML1$] The operators $L_{i }$ satisfy a common "one step"
Lasota Yorke inequality. There are constants $B,\lambda
_{1}\geq 0$ with $\lambda _{1}<1$ such that for all $f\in B_{s},$ $\mu \in
P_{w},$ $i\in \mathbb{N}$%
\begin{equation}
\left\{ 
\begin{array}{c}
||L_{i}f||_{w}\leq ||f||_{w} \\ 
||L_{i}f||_{s}\leq \lambda _{1}||f||_{s}+B||f||_{w}.%
\end{array}%
\right.  \label{1}
\end{equation}

\item[$ML2$] There is a Markov operator $L_0:B_s\to B_s$ having an invariant probability measure $\mu \in B_s$ such that the family of operators satisfy: $\lim_{i\rightarrow +\infty
}L_{i}=L_{0}$ in the $B_{s}\rightarrow B_{w}$ topology.
\footnote{In particular the family of operators satisfy: $\forall \epsilon >0~\exists
N~s.t.~\forall i,j\geq N$ 
\begin{equation}
||(L_{i}-L_{j})||_{B_{s}\rightarrow B_{w}}\leq \epsilon .  \label{nn}
\end{equation}
}

\item[$ML3$] There exists $a_{n}\geq 0$ with $a_{n}\rightarrow 0$ such that
for all $n\in \mathbb{N}$ and $v\in V_{s}$%
\begin{equation}
||L_{0}^{n}(v)||_{w}\leq a_{n}||v||_{s}  \label{3}
\end{equation}%
where%
\begin{equation*}
V_{s}=\{\mu \in B_{s}|\mu (X)=0\}.
\end{equation*}
\end{itemize}

We recall that since $\mu \rightarrow \mu (X)$ is continuous, $V_{s}$ is
closed. Furthermore $\forall i,$ $L_{i
}(V_{s})\subseteq V_{s}$.

We remark that the assumption $(ML1)$ implies that the family of operators $%
L_{i}$ is uniformly bounded when acting on $B_{s}$ and on $B_{w}.$

First, we establish a Lasota-Yorke inequality for a sequential composition of operators satisfying $(ML1)$. The proof of the Lemma is essentially the same as the proof of Corollary \ref{trima}.

\begin{lemma}
\label{lasotaY copy(1)}Let $L_{i}$ be a family of Markov operators \
satisfying $(ML1)$ and let
\begin{equation}
L^{(j,j+n-1)}:=L_{j}\circ L_{j+1}\circ ...\circ ~L_{j+n-1}  \label{Ln}
\end{equation}%
be a sequential composition of operators in such family, then $\forall n,j$%
\begin{equation}
||L^{(j,j+n-1)}f\Vert _{w}\leq ||f\Vert _{w}
\end{equation}%
and%
\begin{equation}
||L^{(j,j+n-1)}f\Vert _{s}\leq \lambda _{1}^{n}\Vert f\Vert _{s}+\frac{B}{1-\lambda
_{1}}\Vert f\Vert _{w}.  \label{lyw}
\end{equation}
\end{lemma}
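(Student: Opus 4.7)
The plan is to prove both inequalities by iterating the one-step bounds (ML1); the structure is essentially the same as in the proof of Corollary \ref{trima}, so a short induction on the length $n$ of the composition suffices.

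For the weak-norm statement, each $L_i$ is a weak contraction by the first line of (ML1). Since $L^{(j,j+n-1)}$ is the composition of $n$ such contractions, $||L^{(j,j+n-1)}f||_w \leq ||f||_w$ follows immediately by $n$ successive applications.

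For the strong-norm Lasota--Yorke inequality, I would induct on $n$. The base case $n=1$ is just (ML1). For the inductive step, I would peel off the outermost operator by writing $L^{(j,j+n-1)}f = L_j\bigl(L^{(j+1,j+n-1)}f\bigr)$ and applying (ML1) to $L_j$, obtaining
\[
||L^{(j,j+n-1)}f||_s \leq \lambda_1\,||L^{(j+1,j+n-1)}f||_s + B\,||L^{(j+1,j+n-1)}f||_w.
\]
The weak-norm statement above bounds the second summand by $B\,||f||_w$, while the inductive hypothesis (applied with starting index $j+1$ and length $n-1$) bounds the first by $\lambda_1\bigl(\lambda_1^{n-1}||f||_s + \tfrac{B}{1-\lambda_1}||f||_w\bigr)$. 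Adding the two contributions gives
\[
||L^{(j,j+n-1)}f||_s \leq \lambda_1^{\,n}||f||_s + \Bigl(\tfrac{\lambda_1 B}{1-\lambda_1}+B\Bigr)||f||_w = \lambda_1^{\,n}||f||_s + \tfrac{B}{1-\lambda_1}\,||f||_w,
\]
closing the induction. Equivalently, iterating from the innermost operator produces the partial geometric sum $B\sum_{k=0}^{n-1}\lambda_1^{\,k} \leq B/(1-\lambda_1)$ as the coefficient of $||f||_w$.

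There is essentially no obstacle in this argument. The crucial input is that the constants $\lambda_1<1$ and $B$ in (ML1) are \emph{uniform} across the family $\{L_i\}_{i\in\mathbb{N}}$; this is precisely what allows the same $\lambda_1^n$ contraction and the bounded geometric-series coefficient $B/(1-\lambda_1)$ to appear regardless of the starting index $j$, so that the final bound depends only on the number of operators composed and not on which ones were used.
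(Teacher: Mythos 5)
Your proof is correct and follows the same route as the paper, which simply iterates the uniform one-step Lasota--Yorke inequality of $(ML1)$ and bounds the accumulated coefficient of $\|f\|_w$ by the geometric series $B\sum_{k=0}^{n-1}\lambda_1^k\leq B/(1-\lambda_1)$ (the paper refers to the identical argument in its Corollary on the composed solenoidal transfer operators). The telescoping identity $\lambda_1\cdot\frac{B}{1-\lambda_1}+B=\frac{B}{1-\lambda_1}$ that closes your induction is exactly the computation the paper summarizes by saying the second coefficient ``keeps being bounded by a geometric sum.''
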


%

The following lemma is an estimate for the distance of the sequential composition of operators from the iterations of $L_0$.

\begin{lemma}
\label{XXX}Let $\delta \geq 0$ and let $ L^{(j,j+n-1)}$ be a sequential composition
of operators $\{L_{i}\}_{i\in \mathbb{N}}$ as in $(\ref{Ln})$ that satisfies the above assumptions.
Let $L_0$ as above such that $||L_{i}-L_0||_{s\rightarrow w}\leq \delta .$\ Then
there is $C\geq 0$ such that $\forall g\in B_{s},\forall j,n\geq 1$%
\begin{equation}
||L^{(j,j+n-1)}g-L_0^{n}g||_{w}\leq \delta (C||g||_{s}+n\frac{B}{1-\lambda }||g||_{w}).
\label{2}
\end{equation}%
where $B$ is the second coefficient of the Lasota Yorke inequality $($\ref{1}%
$)$.
\end{lemma}

\begin{proof}
\ By the assumptions we get%
\begin{equation*}
||L_0g-L_{j}g||_{w}\leq \delta ||g||_{s}
\end{equation*}

hence the case $n=1$ of $(\ref{2})$ is trivial. \ Let us now suppose inductively%
\begin{equation*}
||L^{(j,j+n-2)}g-L_{0}^{n-1}g||_{w}\leq \delta (C_{n-1}||g||_{s}+(n-1)\frac{B}{%
1-\lambda _{1}}||g||_{w})
\end{equation*}%
then%
\begin{eqnarray*}
||L_{j+n-1}L^{(j,j+n-2)}g-L_{0}^{n}g||_{w} &\leq
&||L_{j+n-1}L^{(j,j+n-2)}g-L_{j+n-1}L_{0}^{n-1}g+L_{j+n-1}L_{0}^{n-1}g-L_{0}^{n}g||_{w} \\
&\leq
&||L_{j+n-1}L^{(j,j+n-2)}g-L_{j+n-1}L_{0}^{n-1}g||_{w}+||L_{j+n-1}L_{0}^{n-1}g-L_{0}^{n}g||_{w}
\\
&\leq &\delta (C_{n-1}||g||_{s}+(n-1)\frac{B}{1-\lambda _{1}}%
||g||_{w})+||[L_{j+n-1}-L_{0}](L_{0}^{n-1}g)||_{w} \\
&\leq &\delta (C_{n-1}||g||_{s}+(n-1)\frac{B}{1-\lambda _{1}}%
||g||_{w})+\delta ||L_{0}^{n-1}g||_{s} \\
&\leq &\delta (C_{n-1}||g||_{s}+(n-1)\frac{B}{1-\lambda _{1}}||g||_{w}) \\
&&+\delta (\lambda _{1}^{n-1}||g||_{s}+\frac{B}{1-\lambda _{1}}||g||_{w}) \\
&\leq &\delta \lbrack (C_{n-1}+\lambda _{1}^{n-1})||g||_{s})+n\frac{B}{%
1-\lambda _{1}}||g||_{w}].
\end{eqnarray*}

The statement follows from the observation that, continuing the composition, $ C_{n}$ remains bounded by the sum of a geometric series.
\end{proof}

\bigskip

\bigskip

\begin{lemma}\label{losmem}
Let $L_{i}$ be a sequence of operators satisfying $(ML1),...,(ML3)$.

Then the sequence $L_{i}$ has a strong exponential loss of memory in the following sense.  There are $C,\lambda\geq 0$  such that $\forall j,n\in \mathbb{N}$, $g\in V_{s}$
\begin{equation*}
||L^{(j,j+n-1)}g||_{s}\leq Ce^{-\lambda n}||g||_{s}.
\end{equation*}
\end{lemma}

\begin{proof}
It is standard to deduce from the assumptions that $\mu $ is the unique invariant probability measure of $L_0$ in $B_s$. Now, consider $\mu _{0}\in B_{s}$. 
 Remark that because of the Lasota-Yorke inequality, $\forall j,i\geq 1,g\in B_s $ $$||L^{(j,j+i)}(g)||_s\leq  (\frac{B}{1-\lambda_1}+1)||g||_s.$$
 Now let us onsider $N_0$ such that $\lambda_1^{N_0}\leq \frac{1}{100(\frac{B}{1-\lambda_1}+1)}$ and by $(ML3)$, $N_{2}$ such
that $\forall i\geq N_{2},g\in V_s$ $$||{L_0}^{N_2}g||_{w}\leq \frac{1}{100B}||g||_s.$$ 
Let $M:=\max
(N_0,N_{2}).$
Let $N_1$ such that $$
||L_{i}-L_0||_{s\rightarrow w}\leq \frac{(1-\lambda_1 )}{100M B({C+B})}$$ for all $i\geq N_{1}$. By $(\ref{2})$, $\forall j\geq N_1, i\geq M$
\begin{equation*}
\begin{split}
||L^{(j,j+i-1)}g-{L_0 }^{i}g||_{w} & \leq \frac{(1-\lambda_1 )}{100MB(C+B)}(C||g||_s+i\frac{B||g||_w}{(1-\lambda_1 )})\\
& \leq \frac{i}{100MB}||g ||_s.
\end{split}
\end{equation*}%
 Hence 
 \begin{equation}
 \begin{split}
     ||L^{(j,j+i-1)}g||_w &\leq ||{L_0 }^{i}g||_w+ \frac{i}{100MB}||g ||_s\\
     &\leq 
     \frac{1}{100B}||g||_s + \frac{i}{100MB}||g ||_s.
\end{split}    
\end{equation}

Applying now the Lasota-Yorke inequality we get, for any $j\geq N_1$
\begin{equation}
 \begin{split}
||L^{(j,j+2M-1)}g\Vert _{s} & \leq \lambda
_{1}^{-M}||L^{(j,j+M-1)}g||_s+B\Vert L^{(j,j+M-1)}g\Vert _{w} \\
&\leq  \frac{1}{100}||g||_s+B \frac{1}{100B}||g||_s+\frac{BM}{100MB}||g ||_s \\
&\leq\frac{3}{100}||g||_s
\end{split}    
\end{equation}
and $$||L^{(j,j+2kM-1)}g\Vert _{s}\leq\frac{3}{100}^k||g||_s$$ for each $j\geq N_1$ and $k\geq 1$, $g\in V_s$ establishing the  result.
\end{proof}

\bigskip

\noindent {\bf Acknowledgements:}
S.G. acknowledges the MIUR Excellence Department Project awarded to the
Department of Mathematics, University of Pisa, CUP I57G22000700001.
S.G. was partially supported by  the
research project "Stochastic properties of dynamical systems" (PRIN 2022NTKXCX) of the Italian Ministry of Education and Research.

\noindent {\bf Declaration:} The authors have no competing interests to declare that are relevant to the content of this article.

\bibliographystyle{ieeetr}
\bibliography{extremes.bib}







\end{document}